\numberwithin{equation}{section}
\newcommand{\eps}{\varepsilon}
\newcounter{i}
\theoremstyle{plain}
\newtheorem{thm}{Theorem}[section]
\newtheorem{lem}[thm]{Lemma}
\newtheorem{conj}[thm]{Conjecture}
\noindent \emph{Proof.} {}{#1}{}}{\hfill
\theoremstyle{plain} 
\newcommand{\thistheoremname}{}
\newtheorem{genericthm}{\thistheoremname}
\theoremstyle{definition}
\newtheorem{definition}[thm]{Definition}
\newcommand{\cA}{\mathcal{A}}
\newcommand{\cQ}{\mathcal{Q}} 
\newcommand{\cD}{\mathcal{D}} 
\newcommand{\cB}{\mathcal{B}}
\newcommand{\cE}{\mathcal{E}}
\newcommand{\cM}{\mathcal{M}}
\newcommand{\cS}{\mathcal{S}}
\newcommand{\cH}{\mathcal{H}}
\newcommand{\cX}{\mathcal{X}}
\newcommand{\cP}{\mathcal{P}}
\newcommand{\cG}{\mathcal{G}}
\newcommand{\Prob}[1]{\ensuremath{%
\mathbb P\left[#1\right]
}}
\newcommand{\ProbCond}[2]{\ensuremath{%
    \mathbb P\left[#1\:\middle|\:#2\right]
  }}
\newcommand{\Expect}[1]{\ensuremath{%
\mathbb E\left[#1\right]
}}
\newcommand\filledcirc{\ensuremath{{\bullet}\mathllap{\circ}}}
\newcommand\hoclique{\ensuremath{\text{h}}}
\newcommand\vertclique{\ensuremath{{\text{v}}}}
\def\COMMENT#1{}
\title{Thresholds for $(n,q,2)$-Steiner Systems via Refined Absorption}
\author{
Michelle Delcourt
\thanks{Department of Mathematics, Toronto Metropolitan University (formerly named Ryerson University),
Toronto, Ontario M5B 2K3, Canada {\tt mdelcourt@torontomu.ca}. Research supported by NSERC under Discovery Grant No. 2019-04269.}
\and
Tom Kelly
\thanks{School of Mathematics, Georgia Institute of Technology, Atlanta, GA 30332, USA {\tt tom.kelly@gatech.edu}.
Research supported by the National Science Foundation under Grant No. DMS-2247078.}
\and
Luke Postle
\thanks{Combinatorics and Optimization Department,
University of Waterloo, Waterloo, Ontario N2L 3G1, Canada {\tt lpostle@uwaterloo.ca}. Partially supported by NSERC
under Discovery Grant No. 2019-04304.}}
\date{February 29, 2024}
\begin{document}

\maketitle

\begin{abstract} 



We prove that if $p \geq n^{-(q-6)/2}$, then asymptotically almost surely the binomial random $q$-uniform hypergraph $\mathcal{G}^{(q)}(n,p)$ contains an $(n,q,2)$-Steiner system, provided $n$ satisfies the necessary divisibility conditions.
\end{abstract}

\section{Introduction}

A \textit{Steiner system} with parameters $(n,q,r)$ or an \textit{$(n,q,r)$-Steiner system} is a set $S$ of $q$-subsets of an $n$-set $X$ such that every $r$-subset of $X$ belongs to exactly one element of $S$.  More generally, a \textit{design} with parameters $(n,q,r,\lambda)$ or an \textit{$(n,q,r,\lambda)$-design} is a set $S$ of $q$-subsets of an $n$-set $X$ such that every $r$-subset of $X$ belongs to exactly $\lambda$ elements of $S$.  This type of structure is one of the most fundamental objects of Combinatorial Design Theory.  For $r = 2$, a design with parameters $(n,q,2,\lambda)$ is also called a \textit{balanced incomplete block design (BIBD)} or a \textit{$2$-design}, and for $r = 2$ and $q = 3$, an $(n,3,2)$-Steiner system is a \textit{Steiner triple system} of order $n$.

In order for an $(n,q,r)$-Steiner system to exist, $n$ must satisfy some obvious \textit{divisibility conditions}, namely that $\binom{q - i}{r - i} \mid \binom{n - i}{r - i}$ for all $i \in \{0, \dots, r - 1\}$.  
One of the oldest theorems in combinatorics, due to Kirkman~\cite{K47} in 1847, is that the divisibility conditions are also sufficient for the existence of Steiner triple systems.  The ``Existence Conjecture'', dating back to the mid-1800s, asserts that for all $q$, $r$, and $\lambda$ and for sufficiently large $n$, the divisibility conditions are also sufficient for the existence of $(n,q,r, \lambda)$-designs.
In a celebrated series of papers in the 1970s, Wilson~\cite{WiI, WiII, WiIII} proved that $2$-designs exist for all sufficiently large $n$ satisfying the divisibility conditions.
In 1985, introducing the ``nibble method'', R\"odl~\cite{R85} proved the Erd\H{o}s--Hanani Conjecture~\cite{EH63} that ``approximate'' $(n,q,r)$-Steiner systems exist.
In 2014, Keevash~\cite{K14} proved the Existence Conjecture using ``randomized algebraic constructions''.  Shortly thereafter, in 2016 Glock, K\"uhn, Lo, and Osthus~\cite{GKLO16} gave a combinatorial proof of the Existence Conjecture via ``iterative absorption''.  All of these results have had a profound impact in combinatorics and beyond.
Recently, the first and third authors~\cite{DPI} introduced a technique called ``refined absorption'' and provided a new proof of the Existence Conjecture.  In this paper, we use this technique to show the existence of 2-designs in a probabilistic setting.
We need the following definition.

\begin{definition}
    Let $q \geq 1$ be an integer.  For every integer $n \geq q$ and real $p \in [0,1]$, let $\cG^{(q)}(n,p)$ be the random $n$-vertex $q$-uniform hypergraph in which each $q$-set is included as an edge independently with probability $p$.
    For an increasing property $\cP$ of $q$-uniform hypergraphs and an increasing sequence $(n_i : i \in \mathbb N)$, a function $p^*=p^*(n)$ is a \textit{threshold} if as $i \rightarrow \infty$,
\begin{equation*}
\Prob{ \cG^{(q)}(n_i,p) \in \mathcal{P} } \rightarrow 
 \begin{cases}
      0 & \text{ if }  p=o(p^*) \text{ and }\\
      1 & \text { if } p=\omega(p^*).
    \end{cases} 
 \end{equation*}   
  Although thresholds are not unique, every pair of thresholds is related by some multiplicative constant factor, so we will refer to \textit{the} threshold for a property as is common in the literature.
\end{definition}

For $r = 1$, it is easy to see that $(n,q,1)$-Steiner systems exist if and only if $q \mid n$.  However, the ``threshold'' version of this problem -- determining the threshold for $\cG^{(q)}(qn,p)$ to contain a $(qn,q,1)$-Steiner system, or equivalently, for $\cG^{(q)}(qn,p)$ to contain a \textit{perfect matching} -- was considered to be one of the most important problems in probabilistic combinatorics until its resolution by Johansson, Kahn, and Vu~\cite{JKV08} in 2008. This problem, known as ``Shamir's Problem'' (due to Erd\H{o}s \cite{E81b}), was also a major motivation behind the Kahn--Kalai Conjecture~\cite{KK07}, recently proved by Park and Pham~\cite{PP23}.  (The ``fractional'' version of this conjecture posed by Talagrand~\cite{Ta10} was proved slightly earlier by Frankston, Kahn, Narayanan, and Park~\cite{FKNP21}.)  The $q = 2$ case, concerning perfect matchings in random graphs, is a classic result of Erd\H{o}s and R\'enyi~\cite{ER66}.  Standard probabilistic arguments show that $n^{-q+1}\log n$ is the threshold for the property that every vertex of $\cG^{(q)}(qn,p)$ is contained in at least one edge, which in turn implies that the $(n,q,1)$-Steiner system threshold is at least $n^{-q + 1}\log n$.  Johansson, Kahn, and Vu~\cite{JKV08} proved a matching upper bound, and this fact also follows from the Park--Pham Theorem~\cite{PP23}.  In fact, much more is known.  Kahn~\cite{Ka19, Ka22} recently determined the \textit{sharp threshold} and even proved a \textit{hitting time} result for this problem. 

For $q > r > 1$, determining the threshold for $\cG^{(q)}(n, p)$ to contain an $(n,q,r)$-Steiner system (for $n$ satisfying the divisibility conditions) is more challenging, as any nontrivial upper bound on the threshold necessarily implies not only that these designs exist, but that they exist ``robustly''.  (Studying ``robustness'' of graph properties has been an important trend in extremal and probabilistic combinatorics -- see for example the survey of Sudakov~\cite{Su17}.)  Hence, any result of this type seemed out of reach until Keevash's~\cite{K14} breakthrough in 2014.  Nevertheless, in 2006, Johansson~\cite{Jo06} posed a related conjecture on the threshold problem for \textit{Latin squares}.  
In 2017, Simkin~\cite{S17} conjectured that $n^{-1}\log n$ is the threshold for $\cG^{(q)}(n,p)$ to contain an $(n,q,q-1)$-Steiner system for every integer $q > 1$, and Keevash posed the same problem for $q = 3$ (the case of Steiner triple systems) in his 2018 ICM talk. 
Similar to the $r = 1$ case, standard arguments show that $n^{-1}\log n$ is a lower bound on this threshold.
Utilizing the work of Keevash~\cite{K14} on the Existence Conjecture, Simkin~\cite{S17} showed that the threshold for $(n,q,q-1)$-Steiner systems for any fixed integer $q > 1$ is at most $n^{-\varepsilon}$ for some $\varepsilon > 0$ that depends on $q$.
Following the breakthroughs of Frankston, Kahn, Narayanan, and Park~\cite{FKNP21} and Park and Pham~\cite{PP23} on the Kahn--Kalai Conjecture~\cite{KK07}, a series of results in 2022 culminated in the resolution of the threshold problem for Steiner triple systems.
First, Sah, Sawhney, and Simkin~\cite{SSS23} proved an upper bound on the threshold of $n^{-1+o(1)}$; Kang, Kelly, K\"uhn, Methuku, and Osthus~\cite{KKKMO22} proved the better bound of $n^{-1}\log^2 n$. Subsequently Jain and Pham~\cite{JP22} and independently Keevash~\cite{K22} settled the problem, proving that $n^{-1}\log n$ upper bounds the threshold.
In fact, Jain and Pham~\cite{JP22} and Keevash~\cite{K22} first proved that $n^{-1}\log n$ upper bounds the Latin square threshold and used a reduction of Kang, Kelly, K\"uhn, Methuku, and Osthus~\cite{KKKMO22} to then derive the Steiner triple system threshold.

Besides the cases of $r = 1$ and of $r = 2$ and $q = 3$, the threshold problem for $(n, q, r)$-Steiner systems is wide open.  
Kang, Kelly, K\"uhn, Methuku, and Osthus~\cite{KKKMO22} conjectured that the threshold is $n^{-q+r}\log n$.  
For $p = o(n^{-q+r}\log n)$, asymptotically almost surely $\cG^{(q)}(n,p)$ will contain an $r$-set of vertices that is not contained in any edge and thus does not contain an $(n,q,r)$-Steiner system.  Hence, to prove this conjecture it suffices to show that $n^{-q+r}\log n$ upper bounds the threshold for $\cG^{(q)}(n,p)$ to contain an $(n,q,r)$-Steiner system, as follows.

\begin{conj}[Kang, Kelly, K\"uhn, Methuku, and Osthus \cite{KKKMO22}]\label{conj:KKKMO} For every $q,r\in  \mathbb{N}$ with $q>r$, the following holds. 
If $\binom{q - i}{r - i} \mid \binom{n - i}{r - i}$ for all $i \in \{0, \dots, r-1\}$ and $p = \omega(n^{-q+r}\log n)$, then
asymptotically almost surely $\cG^{(q)}(n, p)$ contains an $(n,q,r)$-Steiner system.
\end{conj}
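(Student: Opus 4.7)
The plan is to reduce the existence question to constructing a suitably spread distribution on $(n,q,r)$-Steiner systems and then invoke the Park--Pham theorem~\cite{PP23}. Recall that a probability measure $\mu$ on subsets of a ground set $X$ is $\kappa$-spread if $\mu(\{S : T \subseteq S\}) \leq \kappa^{|T|}$ for every $T \subseteq X$, and that if $\mu$ is supported on sets of size $m$ and is $\kappa$-spread, then a $p$-random subset of $X$ with $p = \Omega(\kappa \log m)$ contains some set in the support of $\mu$ asymptotically almost surely. Applied with $X = \binom{[n]}{q}$ and $m = \Theta(n^r)$, an $O(n^{-q+r})$-spread distribution on $(n,q,r)$-Steiner systems would yield exactly the conjectured threshold $n^{-q+r}\log n$, since $\log m = \Theta(\log n)$.

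To build such a distribution, I would follow the refined absorption paradigm of the first and third authors~\cite{DPI}, but run it randomly rather than deterministically. The construction would proceed in three phases: first, randomly sample a small absorbing sub-design $A$ from a highly symmetric family of candidate absorbers; second, apply a randomised R\"odl nibble to the design hypergraph induced on the $r$-sets not yet covered by $A$, producing a near-perfect cover; and third, invoke the absorbing property of $A$ to complete the remaining uncovered $r$-sets. Spreadness must then be verified phase by phase: for a candidate set $T$ of $q$-edges, partition $T$ according to which phase selects each edge and bound the probability that all of $T$ is chosen as a product of three factors. The nibble factor should be of order $(n^{-q+r})^{t_2}$, where $t_2$ is the number of $T$-edges selected during the nibble, by quasi-uniformity of the random matchings produced; the absorber factor should be of similar order provided the sampling of $A$ is done symmetrically enough; and the completion phase handles only a vanishing fraction of edges and should contribute a negligible factor.

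The main obstacle is controlling the absorber marginals. A single instance of the refined absorber from~\cite{DPI} uses each of its $q$-edges with probability $\Theta(1)$, not $O(n^{-q+r})$; closing this gap requires randomising the absorber over an orbit under a large group action, ideally a transitive subgroup of $\brm{Sym}(n)$, and showing that the averaged marginals drop to the target. Establishing the existence of such a symmetric absorber family, and verifying that conditioning on $A$ leaves a design hypergraph still amenable to nibble-based spread bounds, is where the heart of the difficulty lies. I expect that with only partial symmetrisation the threshold upper bound will degrade to something like $n^{-(q-6)/2}$ for $r=2$ --- matching the bound announced in the abstract --- and that bridging the gap to the full Conjecture~\ref{conj:KKKMO} would require a new spreadness ingredient, perhaps in the spirit of the Jain--Pham~\cite{JP22} and Keevash~\cite{K22} analyses of random Latin squares that underpin the $q=3$ case.
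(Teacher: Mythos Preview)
The statement you were asked to prove is Conjecture~\ref{conj:KKKMO}, which the paper does \emph{not} prove; it remains open. The paper establishes only the partial result Theorem~\ref{thm:ExistenceSpread} (the $r=2$ case with the weaker exponent $(q-6)/2$). Your proposal is, correspondingly, not a proof either: it is a programme that you yourself concede would only reach $n^{-(q-6)/2}$ for $r=2$ and would require genuinely new ideas to close the gap. So on the headline question there is nothing to compare --- neither you nor the paper proves the conjecture.

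That said, it is worth comparing your outline to what the paper actually does for its partial result, because your mechanism for making the absorber spread is different from theirs. You propose to symmetrise a single absorber over an orbit of a large subgroup of $\mathrm{Sym}(n)$ and hope the averaged marginals fall. The paper does something more local: it takes a \emph{fixed} $C$-refined omni-absorber $A'$ from Theorem~\ref{thm:omni-absorber} as a deterministic template, and then for each clique $H$ in the decomposition family $\cH_{A'}$ it randomly embeds a \emph{rooted $K_q$-booster} $(B_H,\cB_H^\circ,\cB_H^{\filledcirc},H)$ --- a gadget with two disjoint $K_q$-decompositions, one switching $H$ on and one off --- inside a random partial clique (Lemma~\ref{lemma:spread-clique-embedding}, via the conditional Lov\'asz Local Lemma). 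The spreadness of the resulting omni-absorber is then governed by the \emph{rooted density} of the booster (Definition~\ref{def:RootedDensity}), and the explicit construction in Lemma~\ref{lem:sparse-boosters-exist} achieves density $2/(q-2)$, which is precisely what produces the exponent $(q-6)/2$.

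Two points in your outline are off. First, you say the completion phase ``handles only a vanishing fraction of edges and should contribute a negligible factor''; in the paper the absorber phase is exactly the bottleneck, not negligible --- the nibble phase (Theorem~\ref{thm:spread-nibble}) already gives spread $n^{-(q-3)}$, well below the target, and it is the absorber that limits the final bound to $n^{-(q-6)/2}$. Second, your group-orbit symmetrisation idea is plausible in spirit but would need to contend with the fact that the omni-absorber of Theorem~\ref{thm:omni-absorber} is built relative to a \emph{fixed} reserve $X$, so a global $\mathrm{Sym}(n)$ action does not obviously preserve the structure; the paper's booster-replacement trick sidesteps this by randomising locally, clique by clique, while keeping $X$ and the template $A'$ fixed.
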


The main result of this paper is progress on the $r=2$ case of Conjecture~\ref{conj:KKKMO} as follows.

\begin{thm}\label{thm:ExistenceSpread} For every integer $q > 2$, the following holds:  
If $q - 1 \mid n - 1$ and $\binom{q}{2}\mid \binom{n}{2}$ and $p\geq n^{-(q-6)/2}$, then asymptotically almost surely $\cG^{(q)}(n,p)$ contains an $(n, q, 2)$-Steiner system.
\end{thm}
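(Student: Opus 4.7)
The plan is to reduce to the construction of an appropriately spread distribution on $(n,q,2)$-Steiner systems and then invoke the Frankston--Kahn--Narayanan--Park theorem~\cite{FKNP21}. Recall that a probability distribution $\mu$ on subsets of $\binom{[n]}{q}$ is \emph{$\kappa$-spread} if $\mathbb P_{S \sim \mu}[F \subseteq S] \leq \kappa^{|F|}$ for every $F \subseteq \binom{[n]}{q}$, and~\cite{FKNP21} guarantees that $\cG^{(q)}(n,p)$ contains an element of the support of $\mu$ asymptotically almost surely whenever $p \geq C \kappa \log\binom{n}{q}$. Since $\log\binom{n}{q} = \Theta(\log n)$, it suffices to exhibit, for every $n$ obeying the divisibility conditions, a distribution $\mu$ supported on the set $\mathcal{S}$ of $(n,q,2)$-Steiner systems on $[n]$ that is $\kappa$-spread with $\kappa = O(n^{-(q-6)/2}/\log n)$.

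The distribution $\mu$ is obtained by randomizing the refined absorption proof of the Existence Conjecture given in~\cite{DPI}. That construction assembles an $(n,q,2)$-Steiner system in three stages: first, a reservation stage producing an \emph{omni-absorber} $A \subseteq \binom{[n]}{q}$, namely a sparse gadget-based structure capable of completing any partial Steiner system whose uncovered pair graph belongs to a prescribed family; second, a nibble/cover stage that produces a partial Steiner system on $\binom{[n]}{q} \setminus A$ leaving only a tiny fraction of pairs uncovered; and third, a vortex stage that iteratively trims the uncovered pair graph until it matches the target of $A$, at which point $A$ completes the system. To obtain $\mu$, every choice (gadget placement, nibble edge selection, vortex nesting, local absorption step) is made uniformly at random among valid options.

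To verify that $\mu$ is $\kappa$-spread, fix $F \subseteq \binom{[n]}{q}$ and partition $F = F_A \cup F_N \cup F_V$ according to which stage contributes each edge. For edges in $F_N \cup F_V$, the near-uniformity of the nibble and random greedy steps gives per-edge probability $O(n^{2-q})$ together with approximate independence across disjoint rounds, so those contributions are negligible compared with the target $\kappa$. The binding constraint is $F_A$: a gadget in the refined omni-absorber sits on a constant number of vertices and contains a constant number of $q$-edges, and a counting argument comparing the number of gadgets containing a fixed $e$ against the total number of gadgets selected yields $\mathbb P_{\mu}[e \in A] = O(n^{-(q-6)/2}/\log n)$, which is precisely the source of the exponent in the theorem. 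The main obstacle is promoting this per-edge bound to the joint bound $\mathbb P_{\mu}[F_A \subseteq S] \leq O(\kappa)^{|F_A|}$, since gadgets sharing vertices are correlated; here one leverages the hierarchical structure of~\cite{DPI} to argue that conditioning on any collection of already-selected absorber edges leaves sufficient entropy at the remaining levels to multiply the per-edge estimates. Combined with standard concentration for the nibble and the vortex reduction, this produces the desired $\kappa$-spread distribution $\mu$, and the theorem follows from~\cite{FKNP21}.
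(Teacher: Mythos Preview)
Your high-level reduction to a spread distribution plus FKNP/Park--Pham is correct and matches the paper. However, the construction of the spread distribution you sketch has two genuine gaps.

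First, you describe the refined absorption proof of~\cite{DPI} as having a ``vortex stage that iteratively trims the uncovered pair graph.'' It does not: the vortex is the hallmark of the \emph{iterative} absorption method of Glock--K\"uhn--Lo--Osthus, and the paper explicitly observes that iterative absorption is intrinsically \emph{not} spread, because the cliques decomposing the innermost vortex set have no room to be randomized. Refined absorption replaces the vortex by a single omni-absorber applied once; your three-stage description therefore does not correspond to any existing proof, and the $F_V$ analysis is addressing a step that should not be there.

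Second, and more seriously, you assert without argument that randomly placing the omni-absorber gadgets yields $\mathbb P[e\in A]=O(n^{-(q-6)/2}/\log n)$ and that this ``is precisely the source of the exponent.'' In the paper this exponent does not come from simply randomizing the placement of the gadgets of~\cite{DPI}; it comes from a new ingredient. One takes the (deterministic) omni-absorber from~\cite{DPI} as a fixed template, and for every clique $H$ in its decomposition family attaches a randomly embedded \emph{rooted $K_q$-booster}: a graph with two $K_q$-decompositions, one used when $H$ is needed and one when it is not. The spread exponent is then exactly the reciprocal of the \emph{rooted density} of the booster, and the paper constructs boosters of rooted density $2/(q-2)$ (this is a separate lemma with its own nontrivial construction and iteration argument). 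Without this booster idea there is no mechanism producing $n^{-(q-6)/2}$; a naive randomization of gadget placement gives no reason to expect this particular exponent. Relatedly, the correlations you wave at are handled not by a ``hierarchical structure'' argument but by embedding the boosters disjointly via the Lov\'asz Local Lemma and then controlling the conditional distribution with the Haeupler--Saha--Srinivasan / Jain--Pham conditional-LLL bound. Your proposal does not identify either of these ideas.
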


Besides the $q = 3$ case, no nontrivial upper bound on the threshold for $(n,q,2)$-Steiner systems was known prior to our work.  
The exponent of $n$ in the lower bound for $p$ in Theorem~\ref{thm:ExistenceSpread} is essentially a factor of two away from the conjectured value.
For $p = 1$, Theorem~\ref{thm:ExistenceSpread} is equivalent to Wilson's~\cite{WiI, WiII, WiIII} result that the Existence Conjecture holds for $r = 2$.

For the rest of the paper, we will view Steiner systems through the lens of graph decompositions.  
For graphs $F$ and $G$, an \textit{$F$-packing} of $G$ is a collection $\cH$ of pairwise edge-disjoint subgraphs of $G$, each isomorphic to $F$, and an \textit{$F$-decomposition} of $G$ is an $F$-packing $\cH$ of $G$ in which every edge of $G$ is in an element of $\cH$.  
An $(n,q,2)$-Steiner system corresponds to a $K_q$-decomposition of $K_n$, where $K_n$ denotes the complete graph on $n$ vertices.  More generally, an $(n,q,r)$-Steiner system corresponds to a decomposition of the edges of the complete $n$-vertex $r$-uniform hypergraph $K^r_n$ into subhypergraphs isomorphic to $K^r_q$.  
We say a graph $G$ is \textit{$K_q$-divisible} if the degree $d(v)$ of every vertex $v\in V(G)$ is divisible by $q - 1$ and $e(G)$ is divisible by $\binom{q}{2}$.  Note that $K_n$ is $K_q$-divisible if and only if $q - 1 \mid n - 1$ and $\binom{q}{2} \mid \binom{n}{2}$.

To prove Theorem~\ref{thm:ExistenceSpread}, we employ the Park--Pham Theorem~\cite{PP23}.  In fact, the fractional version of Frankston, Kahn, Narayanan, and Park~\cite{FKNP21} suffices.  To use this theorem, our main objective is to show the existence of a sufficiently ``spread'' probability distribution on the $K_q$-decompositions of $K_n$ (provided $K_n$ is $K_q$-divisible). In this context, the definition of \textit{spread} is as follows.

\begin{definition}
    A probability distribution on $K_q$-decompositions $\cH$ of a graph $G$ is \textit{$\sigma$-spread} if
    \begin{equation*}
        \Prob{\cS \subseteq \cH} \leq \sigma^{|\cS|} \text{ for all $K_q$-packings $\cS$ of $G$.} 
    \end{equation*}
\end{definition}

The Park--Pham Theorem implies that for some absolute constant $K > 0$, if there exists a $\sigma_n$-spread probability distribution on $K_q$-decompositions of $K_n$ and $p \geq K\sigma_n\log n$, then $\cG^{(q)}(n, p)$ asymptotically almost surely contains an $(n, q, 2)$-Steiner system.  Hence, Theorem~\ref{thm:ExistenceSpread} follows immediately from the following result.

\begin{thm}\label{thm:spread-decomposition}
  For every integer $q > 2$, there exists $\beta > 0$ such that the following holds for all sufficiently large $n$.  If $K_n$ is $K_q$-divisible, then there exists an $\left(n^{-(q - 6)/2 - \beta}\right)$-spread probability distribution on $K_q$-decompositions of $K_n$.
\end{thm}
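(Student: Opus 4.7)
The strategy is to combine the refined absorption framework of~\cite{DPI} with a nibble-type spread construction. I would build the random $K_q$-decomposition $\cH$ of $K_n$ in two stages: first a randomized \emph{omni-absorber} $A$ prepared to absorb the leftover of the second stage, and second a spread nibble $K_q$-packing on $K_n \setminus A$ whose remainder is absorbed by $A$.

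In Stage 1, I would use the refined absorption technique of~\cite{DPI} to construct a random $K_q$-divisible subgraph $A \subseteq K_n$ of size $o(n^2)$ together with a menu of absorbing decompositions, i.e.\ such that for every sufficiently pseudorandom $K_q$-divisible graph $L \subseteq K_n \setminus A$, one of the menu decompositions yields a $K_q$-decomposition of $A \cup L$. The absorber is assembled from many elementary gadgets placed on random vertex sets, so that with high probability the randomization ensures both that any realistic nibble leftover is absorbable and that each $K_q$ of $K_n$ appears in the realized absorber decomposition with probability $O(n^{-(q-6)/2 - \beta})$, with the analogous joint bound for $K_q$-packings contained in $A$.

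In Stage 2, conditioned on $A$, I would sample a $K_q$-packing of $K_n \setminus A$ via a spread random process of R\"odl nibble type, if necessary combined with the Frankston--Kahn--Narayanan--Park spread lemma applied to the auxiliary hypergraph whose vertices are copies of $K_q$ in $K_n \setminus A$. A standard pseudorandom analysis shows that this produces a partial $K_q$-decomposition of $K_n \setminus A$ whose leftover $L$ is pseudorandom and $K_q$-divisible with high probability, and the resulting distribution on $K_q$-copies is $n^{-q + 2 + o(1)}$-spread, which is far below the target. For the final spread bound, any $K_q$-packing $\cS \subseteq \cH$ splits as $\cS_A \sqcup \cS_N$ between absorber and nibble, and conditional independence together with the two stage-wise spread bounds yields $\Prob{\cS \subseteq \cH} \leq n^{-((q-6)/2 + \beta)|\cS|}$, as required.

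The principal obstacle is the spread analysis of Stage 1. Classical absorbers concentrate many $K_q$'s on a small vertex set, which is fatal for spread; the refined absorption technique of~\cite{DPI} circumvents this by maintaining a rich menu of alternative gadgets and randomizing the selection, but bounding \emph{joint} occurrences of $K_q$-packings inside the absorber requires delicately decoupling gadget placements and then arguing via a union bound over the ways in which $\cS_A$ distributes among gadgets. I expect the exponent $(q-6)/2$ to emerge from the tradeoff between the vertex size of an elementary gadget and the number of alternative placements it admits per absorbable edge, tuned so that the per-$K_q$ marginal meets the target $n^{-(q-6)/2 - \beta}$ while still providing enough redundancy to complete absorption of a typical nibble leftover.
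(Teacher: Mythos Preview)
Your high-level two-stage architecture (random omni-absorber plus spread nibble, then combine spread bounds over the partition $\cS = \cS_A \sqcup \cS_N$) matches the paper, and your Stage~2 is fine, if anything overkill: the paper gets a spread nibble by a crude random sparsification of the nibble-with-reserves hypergraph (Theorem~\ref{thm:spread-nibble}), with no need for FKNP here.

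The genuine gap is in Stage~1. You describe the spread omni-absorber as ``assembled from many elementary gadgets placed on random vertex sets'' and hope the exponent $(q-6)/2$ arises from some gadget-size/redundancy tradeoff. This is too vague, and the paper's mechanism is different and quite specific. The paper does \emph{not} build the omni-absorber randomly from scratch; it first takes a \emph{deterministic} $C$-refined omni-absorber $A'$ with decomposition family $\cH_{A'}$ from Theorem~\ref{thm:omni-absorber} as a black box, and then, for each clique $H\in\cH_{A'}$, attaches a random \emph{rooted $K_q$-booster} $(B_H,\cB_H^{\circ},\cB_H^{\filledcirc},H)$: a graph $B_H$ edge-disjoint from $A'$ such that $B_H$ and $B_H\cup H$ each admit $K_q$-decompositions. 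This lets one replace every single clique in $\cQ_{A'}(L)$ by a random family of cliques, and the resulting $A=A'\cup\bigcup_H B_H$ is still an omni-absorber. The construction of a booster with rooted density at most $2/(q-2)$ (Lemma~\ref{lem:sparse-boosters-exist}) is the combinatorial heart of the argument, and you have no analogue of it.

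The exponent then falls out of an explicit calculation, not a tradeoff to be tuned: rooted density $\le 2/(q-2)$ means any sub-$K_q$-packing $\cS_H$ of a booster has $\ge (q-2)|\cS_H|/2$ unrooted vertices, so the vertex-spread embedding of the booster (Lemma~\ref{lemma:spread-clique-embedding}, proved via the Lov\'asz Local Lemma \emph{distribution}, not just the LLL) gives probability $\lesssim n^{-(q-2)|\cS|/2}$ for $\cS_H\subseteq \cB_H^{*_H}$; a union bound over which $H\in\cH_{A'}$ each clique of $\cS$ belongs to costs $|\cH_{A'}|^{|\cS|}\le (n^2/C^{3/4})^{|\cS|}$, yielding $(C^{-1/4}n^{-(q-6)/2})^{|\cS|}$. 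Your description of ``decoupling gadget placements via union bound'' is in the right direction, but the conditional LLL (Lemma~\ref{lem:conditionalLLL}) is essential to make the boosters edge-disjoint while remaining close to the independent-placement spread, and you do not mention it.

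Two smaller structural points: (i) the paper uses a separate reserve $X\subseteq K_n$ (Lemma~\ref{lem:reservoir}) so that the leftover $L$ lives inside $X$, and the omni-absorber absorbs \emph{every} $K_q$-divisible $L\subseteq X$, not just ``sufficiently pseudorandom'' ones; (ii) the final combination is not quite conditional independence of the two stages, since the absorber and the nibble are coupled through $L$; the paper handles this by bounding $\Prob{\bigcup_{L\subseteq X}\{\cS_A\subseteq \cQ_A(L)\}}$ uniformly in $L$ and then summing over partitions of $\cS$.
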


In fact, Theorem~\ref{thm:spread-decomposition} implies a slightly stronger form of Theorem~\ref{thm:ExistenceSpread} with the condition ``$p \geq n^{-(q-6)/2}$'' replaced with ``$p \geq n^{-(q - 6)/2 - \beta}$'' for some $\beta > 0$ sufficiently small with respect to $q$; however, we chose not to state this in Theorem~\ref{thm:ExistenceSpread} for brevity, since this bound is likely not optimal.  In fact, it seems likely that the ``$6$'' can be improved to ``$2$'' or even to ``$1 + o(1)$'', but we did not pursue optimizing this constant so as to present a shorter proof.  The multiplicative $1/2$ factor on the other hand seems to be a theoretical limit for our methods, which we discuss further in the next section.


We conclude this section with some open questions.  First, we note that our methods easily imply that for all integers $q > r\ge 1$, there exists some $\varepsilon > 0$ depending on $q$ and $r$ such that Conjecture~\ref{conj:KKKMO} holds with $p=n^{-\varepsilon}$ (that is, this gives an independent proof of Simkin's result and generalizes it to all $q$). It would be interesting to prove explicit bounds with an eye towards proving Conjecture~\ref{conj:KKKMO}.

Since the proof of the Park--Pham Theorem is nonconstructive, we do not have an efficient algorithm for finding the $(n, q, 2)$-Steiner system promised by Theorem~\ref{thm:ExistenceSpread}. As we discuss in the next section, for $p \geq n^{-(q - 6)/4}$, we believe our methods would yield a randomized polynomial-time algorithm for finding an $(n,q,2)$-Steiner system asymptotically almost surely in $\cG^{(q)}(n,p)$. It would be interesting to determine whether such an algorithm exists in the setting of Conjecture~\ref{conj:KKKMO}.  Even the $r = 1$ case of this problem is open.

Recently, the first and third authors~\cite{DPII} proved the existence of high-girth Steiner systems.  It is plausible that Conjecture~\ref{conj:KKKMO} and Theorem~\ref{thm:ExistenceSpread} hold for high girth Steiner systems as well.  Similarly, the authors~\cite{DKPIII} recently proved new results for $K_q$-packings of random graphs, and it would be interesting to see whether the results of these two papers can be combined.  For example, given $p_1 = p_1(n)$ and $p_2 = p_2(n)$, when is it possible to find a $K_q$-packing of the binomial random graph $G\sim G(n,p_1)$ with small leave using cliques of $G$ chosen independently at random with probability $p_2$?  

The Nash-Williams Conjecture~\cite{N-W70} asserts that every $K_3$-divisible $n$-vertex graph with minimum degree at least $3n/4$ admits a $K_3$-decomposition, and it has been conjectured \cite{Gu91, BKLO16} that more generally a $K_q$-divisible $n$-vertex graph with minimum degree at least $(1 - 1/(q + 1))n$ admits a $K_q$-decomposition.  It would again be interesting to see whether this problem can be combined with those in this series.  For example, do such graphs admit high-girth decompositions, and can we find these decompositions using only a binomial random selection of the cliques?  We conjecture that in fact, for all $g$, if $G$ is an $n$-vertex $K_q$-divisible graph with minimum degree at least $(1 - 1/(q + 1))n$ and $p = \omega(n^{2-q}\log n)$, then asymptotically almost surely there exists a $K_q$-decomposition of $G$ of girth at least $g$ using only a binomial $p$-random selection of the copies of $K_q$ of $G$.

In the next section, we provide an overview of the proof of Theorem~\ref{thm:spread-decomposition} before outlining the rest of the paper.

\section{Proof Overview}\label{s:Overview}

Theorem~\ref{thm:ExistenceSpread} asserts the existence of $(n,q,2)$-Steiner systems in $\mathcal{G}^{(q)}(n,p)$ for some small $p$, and similarly, Theorem~\ref{thm:spread-decomposition} asserts the existence of a highly spread distribution on $(n,q,2)$-Steiner systems. It is natural to inquire how the existential proofs for $(n,q,2)$-Steiner systems fare in such settings. Wilson's proof~\cite{WiI,WiII,WiIII} uses a recursive algebraic construction and hence is not amenable due to its lack of randomness. Keevash's proof~\cite{K14} utilizes ``random algebraic constructions''; indeed, in theory his work yields a spread of $n^{-\varepsilon}$ for some $\varepsilon > 0$ (a la Simkin~\cite{S17} for the $q=r+1$ case), but the $\varepsilon$ is likely to be far from optimal (namely $\varepsilon$ is at most $2^{-\Omega(q)}$ say). The iterative absorption proof of Glock, K\"uhn, Lo, and Osthus~\cite{GKLO16} builds a $K_q$-decomposition by slowly restricting the `leftover' (i.e. undecomposed) edges to smaller and smaller sets of vertices. This inherently is not highly spread since the cliques used to decompose the smallest set of vertices will not be highly spread. 
Meanwhile, the techniques that resolved the $q=3$ case of Conjecture~\ref{conj:KKKMO} do not seem to readily extend to the case of general $q$ since they rely on the properties of Latin squares and their association with edge-colorings (and hence are specific to the $q=3$ case). 

Thus, the key to our proof of Theorem~\ref{thm:spread-decomposition} is to use the method of ``refined absorption'' developed by the first and third authors~\cite{DPI} in their new proof of the Existence Conjecture. While that proof does not immediately carry over to our settings, we are fortunately able to use the key technical theorem from~\cite{DPI} (see Theorem~\ref{thm:omni-absorber} below) as a black box. The novel (and certainly still non-trivial) work of this paper then is how to utilize the template provided by refined absorption and apply it to produce highly spread distributions of $(n,q,2)$-Steiner systems. 

\subsection{Proof of Existence via Refined Absorption}

To that end, we first outline the proof of Existence from~\cite{DPI} as well as the key technical definition and black box theorem before discussing the modifications that allow us to extract the true power of this new proof.
The iterative absorption proof of Existence uses \textit{absorbers}.  In this context, if $L$ is a $K_q$-divisible graph, we say a graph $A$ is a \emph{$K_q$-absorber} for $L$ if $V(L)\subseteq V(A)$ is independent in $A$ and both $A$ and $L\cup A$ admit $K_q$-decompositions.
The first key concept from~\cite{DPI} is that of an `omni-absorber', an object that absorbs \emph{all} possible leftovers.

\begin{definition}[Omni-Absorber]
Let $q \geq 3$ be an integer. Let $X$ be a graph. We say a graph $A$ is a \textit{$K_q$-omni-absorber} for $X$ with \emph{decomposition family} $\mathcal{H}$ and \emph{decomposition function} $\mathcal{Q}_A$ if $V(X)=V(A)$, $X$ and $A$ are edge-disjoint, $\mathcal{H}$ is a family of subgraphs of $X\cup A$ each isomorphic to $K_q$ such that $|E(H)\cap E(X)|\le 1$ for all $H\in\mathcal{H}$, and for every $K_q$-divisible subgraph $L$ of $X$, there exists $\mathcal{Q}_A(L)\subseteq \mathcal{H}$ that are pairwise edge-disjoint and such that $\bigcup \mathcal{Q}_A(L)=L\cup A$. 
\end{definition}

The next key concept is that of \emph{refinement} of an omni-absorber, that is that every edge of $X\cup A$ is in only constantly many cliques of the decomposition family as follows.

\begin{definition}[Refined Omni-Absorber]
Let $C\ge 1$ be real. We say a $K_q$-omni-absorber $A$ for a graph $X$ with decomposition family $\mathcal{H}$ is \emph{$C$-refined} if $|\{H\in \mathcal{H} : e\in E(H) \}| \le C$ for every edge $e\in X\cup A$.
\end{definition}

We may now state the key technical theorem from~\cite{DPI} which asserts that linearly efficient refined omni-absorbers exist. 
Here we recall just the graph version of the main omni-absorber theorem from~\cite{DPI} (which was more generally proved for hypergraphs).

\begin{thm}\label{thm:omni-absorber}
    For every integer $q \ge 3$, there exists a real $C\ge 1$ such that the following holds: If $X$ is a spanning subgraph of $K_n$ with $\Delta(X) \le \frac{n}{C}$ and we let $\Delta:= \max\left\{\Delta(X),\sqrt{n}\cdot \log n\right\}$, then there exists a $C$-refined $K_q$-omni-absorber $A \subseteq K_n$ for $X$ such that $\Delta(A)\le C \cdot \Delta$.
\end{thm}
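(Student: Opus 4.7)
My plan is to build $A$ as a union of many small, constant-sized ``edge-toggling gadgets'', one assigned to each edge of $X$, and to verify $C$-refinement and the degree bound via a random placement argument. The main building block is a standard object from absorber theory: for every edge $e$ there is a $K_q$-divisible graph $G_e$ on $O_q(1)$ vertices containing $e$ that admits two edge-disjoint $K_q$-decompositions, one using a clique $K_e$ through $e$ and one that avoids $e$ entirely. Taking $A$ to be (a carefully embedded copy of) the union of $G_e - e$ over $e \in X$, the decomposition function $\mathcal{Q}_A$ acts as follows: for any $K_q$-divisible $L \subseteq X$ and each $e \in L$, use the alternative that covers $e$; for each $e \in X \setminus L$, use the alternative that avoids $e$. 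Because each gadget is locally $K_q$-divisible in both modes, the union over $e$ is a $K_q$-decomposition of $L \cup A$, and the family $\mathcal{H}$ consists of the cliques from both modes of every gadget.

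To embed the gadgets, I would use a random greedy procedure: for each $e \in X$, place $G_e$ on a uniformly random $O_q(1)$-subset of $V(K_n) \setminus V(e)$ that does not currently look overloaded. Because $\Delta(X) \le n/C$, at each vertex $v$ the contribution from gadgets for edges incident to $v$ is $O(\Delta(X))$, and the contribution from gadgets for non-incident edges whose random set hits $v$ is $O(e(X)/n) = O(\Delta(X))$ in expectation. Chernoff-type concentration then gives $\Delta(A) \le C \cdot \Delta$ with high probability, and the $\sqrt n \log n$ floor in $\Delta$ absorbs the natural square-root deviation at vertices with small $X$-degree. The same concentration implies that every edge of $K_n$ lies in only $O(1)$ gadgets in expectation, which is what one needs for the $C$-refinement of $\mathcal{H}$.

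The main obstacle, and the part that cannot be finessed by purely random placement, is turning these high-probability statements into a deterministic worst-case bound with a \emph{single} constant $C = C(q)$ independent of $n$. Rare overloads can produce isolated vertices or edges whose $\mathcal{H}$-degree blows up, and the omni-absorber must handle \emph{every} $K_q$-divisible $L$, not merely a typical one. This is where an iterative absorption/vortex step enters: after the random placement one isolates a small set of ``bad'' edges into a vortex $U \subseteq V(K_n)$ of size $\Theta(n)$ and recurses, repairing the construction layer by layer. Keeping $C$ bounded through the recursion forces the vortex shrinkage to be geometric at a rate depending only on $q$ and the number of layers to be $O_q(1)$; constructing the base case directly and tracking the propagation of divisibility constraints across layers is the genuinely delicate part, and is precisely where the refined absorption framework of~\cite{DPI} is needed in place of standard iterative absorption.
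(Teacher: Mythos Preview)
First, a contextual note: in the paper this theorem is not proved at all but is quoted as a black box from~\cite{DPI}; so there is no ``paper's own proof'' to compare against here. That said, your sketch has a genuine mathematical gap in its very first step.

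Your basic building block does not exist. You ask for, for each edge $e\in X$, a $K_q$-divisible graph $G_e$ containing $e$ that admits two $K_q$-decompositions, one covering $e$ via a clique $K_e$ and one ``that avoids $e$ entirely''. But any $K_q$-decomposition of $G_e$ covers every edge of $G_e$, including $e$; so what you must really mean is that both $G_e$ and $G_e-e$ are $K_q$-decomposable, hence both $K_q$-divisible. Removing a single edge changes the degree of each endpoint by $1$, so for $q\ge 3$ (where $q-1\ge 2$) it is impossible for $G_e$ and $G_e-e$ to be simultaneously $K_q$-divisible. Consequently there is no ``per-edge toggle'' gadget, and the union-of-independent-gadgets decomposition function you describe cannot produce a $K_q$-decomposition of $L\cup A$ for arbitrary $K_q$-divisible $L\subseteq X$. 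The divisibility obstruction is not a technicality you can clean up with a vortex; it is exactly what forces the gadgets to be coupled across edges of $X$ (e.g.\ via shared ``fake'' cliques or transformer structures that move the parity defects around), and designing that coupling while keeping every edge in only $O_q(1)$ cliques of $\mathcal{H}$ is the actual content of the refined-absorption theorem in~\cite{DPI}.

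Your final paragraph essentially concedes this by invoking~\cite{DPI} for the hard step, which is circular: the theorem you are trying to prove \emph{is} the main omni-absorber theorem of that paper. If you want a self-contained argument, you need to replace the impossible single-edge gadget with a mechanism that respects $K_q$-divisibility globally --- for instance, absorbers attached to a fixed family of $K_q$-divisible ``atoms'' together with a way to rewrite any $K_q$-divisible $L\subseteq X$ as a bounded combination of those atoms --- and then carry out the embedding and refinement analysis on that structure.
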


\noindent
These linearly efficient omni-absorbers permitted a more streamlined proof of the Existence Conjecture whose outline we now recall from~\cite{DPI} but restricted to the graph case for simplicity:

\begin{enumerate}
    \item[(1)] `Reserve' a random subset $X$ of $E(K_n)$.
    \item[(2)] Construct a $K_q$-omni-absorber $A$ of $X$.
    \item[(3)] ``Regularity boost'' $K_n\setminus (A\cup X)$.
    \item[(4)] Apply ``nibble with reserves'' theorem to find a $K_q$-packing of $K_n\setminus A$ covering $K_n\setminus (A\cup X)$ and then extend this to a $K_q$-decomposition of $K_n$ by definition of omni-absorber.  
\end{enumerate}

We now discuss the difficulties in adapting this proof to the threshold or spread settings. The main difficulty for the proofs of Theorem~\ref{thm:ExistenceSpread} and Theorem~\ref{thm:spread-decomposition} lies in Step (2), namely we need to embed an omni-absorber inside $\mathcal{G}^{(q)}(n,p)$ for Theorem~\ref{thm:ExistenceSpread} and in a spread way for Theorem~\ref{thm:spread-decomposition}. Similarly, we will also require a threshold/spread version of nibble with reserves for Step (4). However, since we are not striving for the optimal value of $p$, this may be accomplished via a simple random sparsification argument which will only require a $p$ that is ${\rm polylog}~n$ times the conjectured value of $p$ (and hence is not the bottleneck for our proof).

\subsection{Building a Spread Omni-Absorber}

For Step (2), we have to ensure that the $K_q$'s used in the decomposition family of our omni-absorber can be embedded inside $\mathcal{G}^{(q)}(n,p)$ or in a spread way. With the ideas from this paper, it may be possible to amend every step of the proof of Theorem~\ref{thm:omni-absorber} to be done inside $\mathcal{G}^{(q)}(n,p)$ or in a spread way; however, it is far easier to use Theorem~\ref{thm:omni-absorber} as a black box and then seek to increase the spread using structures we call \textit{spread boosters}.

To that end, we view the omni-absorber $A$ from Theorem~\ref{thm:omni-absorber} as a template, and  we randomly embed private spread boosters for each clique in the decomposition family $\mathcal{H}$ of $A$. 
Here, a \textit{$K_q$-booster} is a graph with two disjoint $K_q$-decompositions $\mathcal{B}^{\circ}$ and $\mathcal{B}^{\filledcirc}$ (see Definition~\ref{def:Booster}).  
If a clique $Q\in \mathcal{H}$ is also in $\mathcal{B}^{\circ}\cup \mathcal{B}^{\filledcirc}$, say $\mathcal{B}^{\circ}$ without loss of generality, then we can replace $Q$ in the decomposition family $\mathcal{H}$ with $(\mathcal{B}^{\circ}\setminus \{Q\}) \cup \mathcal{B}^{\filledcirc}$ (the other cliques of the booster). Then if $Q$ would be used to decompose $L\cup A$, we instead use $\mathcal{B}^{\filledcirc}$, while if $Q$ is not used to decompose $L\cup A$, we use $\mathcal{B}^{\circ}\setminus \{Q\}$. In this way, the booster may be viewed as a `non-trivial' absorber for the clique $Q$.  
Since we are free to choose these boosters randomly, the new set of cliques will be more random than the original pre-determined clique (and hence we ``boosted the spread'').

For the proof then, we argue that the boosters are spread if chosen independently. However, we must use the Lov\'asz Local Lemma to ensure that the boosters are embedded disjointly. We argue via the Lov\'asz Local Lemma distribution that the spreadness of a fixed set of cliques is not too much larger for the disjoint case than if the choices were truly independent. 



The bottleneck for this argument turns out to be the rooted density of the boosters (see Definition~\ref{def:RootedDensity}). In this paper, we construct boosters of density $2/(q - 2)$ (see Lemma~\ref{lem:sparse-boosters-exist}) and this yields a value of $1/n^{(q-6)/2}$ for spreadness. With more effort, we could probably improve the proof to yield the reciprocal of the density in the exponent. Similarly, the booster density could perhaps be improved to $2/(q-1+\beta)$ for any small $\beta > 0$. 
 However, we have not pursued this in this paper for the sake of brevity.  As shown in \cite[Lemma 10.1]{DKPIII}, every non-isolated vertex of a $K_q$-booster is in  at least two cliques of each decomposition, and thus, a double counting argument shows that $K_q$-boosters will always have rooted density at least $2/q$.\COMMENT{
    Let $B$ be a $K_q$-booster with decompositions $\cH$ and $\cH'$.  We may assume the rooted booster is obtained from $B$ by rooting on some clique in $\cH$. We may also assume without loss of generality that $B$ has no isolated vertices. Hence every vertex is in at least two cliques of each decomposition.
Thus by hypergraph handshaking, 
    \begin{equation*}
        2|V(B)| \geq \sum_{v \in V(B) }\text{\# cliques in $\cH$ containing v} = q|\cH|.
    \end{equation*}
    Hence the rooted density is at least
    \begin{equation*}
        \frac{|\cH| - 1}{|V(B)| - q} \geq \frac{2|V(B)|/q - 1}{|V(B)| - 1} = \frac{2/q - 1/|V(B)|}{1 - 1/|V(B)|}.
    \end{equation*}
    The right side of the inequality above is decreasing in $|V(B)|$ with a limit of $2 / q$, as desired.
 } Hence, new techniques would be needed to prove Conjecture~\ref{conj:KKKMO} for general $q$. 

Finally, we remark that the booster embedding could have been done inside $G^{(q)}(n,p)$ directly using the techniques from~\cite{DKPIII}; however, that would require the existence of both decompositions of the boosters inside, which yields an extra factor of $2$ in the threshold exponent ($p \geq n^{-{(q - 6)}/{4}}$ vs $n^{-{(q - 6)}/{2}}$-spread) but with the benefit that it yields an algorithm for finding the $(n,q,2)$-Steiner system. By using the Park-Pham Theorem and spreadness, this mitigates the extra factor of $2$ at the cost of being non-constructive.


\subsection{Outline of Paper}\label{ss:Outline}


In Section~\ref{s:Previous}, we recall various necessary previous results for our main proofs. In Section~\ref{s:SpreadOmniAbsorber}, we prove a spread distribution on omni-absorbers modulo the construction of spread boosters which we do in Section~\ref{sec:spread-boosters}. In Section~\ref{s:SpreadNibble}, we prove a spread version of nibble with reserves. In Section~\ref{s:MainProof}, we then prove Theorem~\ref{thm:spread-decomposition}, from which Theorem~\ref{thm:ExistenceSpread} follows via the Park-Pham Theorem. 

\section{Preliminaries}\label{s:Previous}

In this section, we collect the other previous results we need.

\subsection{Reservoir}

For Step (1), we require the following lemma from~\cite{DPI}.

\begin{lem}\label{lem:reservoir}
  For every integer $q\geq 3$, there exists $\eps > 0$ such that the following holds for sufficiently large $n \in \mathbb N$.  If $G$ is an $n$-vertex graph with $\delta(G)\ge (1-\varepsilon)n$ and $1 / n^\eps \leq p \leq 1$, then there exists $X\subseteq G$ such that $\Delta(X)\le 2pn$ and for all $e\in E(G)\setminus E(X)$, there exist at least $\eps p^{\binom{q}{2}-1}n^{q-2}$ $K_q$'s in $X\cup \{e\}$ containing $e$.
\end{lem}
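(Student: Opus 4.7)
The plan is to take $X$ to be a random subgraph of $G$ in which each edge of $G$ is included independently with probability $p$, and to verify via concentration that both desired properties hold simultaneously with probability tending to $1$ (so in particular such an $X$ exists).

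The maximum-degree bound is the easier half. For each vertex $v \in V(G)$, the $X$-degree $d_X(v)$ is a sum of at most $n-1$ independent $\mathrm{Bernoulli}(p)$ indicators with expectation at most $pn$. Since $p \geq n^{-\eps}$, we have $pn \geq n^{1-\eps} \gg \log n$, so a Chernoff bound gives $\Pr[d_X(v) > 2pn] \leq \exp(-\Omega(pn))$, which is more than enough for a union bound over the $n$ vertices of $G$.

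The more substantive step is to show that for every edge $e \in E(G)$, the random variable $N_e$ counting $K_q$-copies in $X \cup \{e\}$ that contain $e$ satisfies $N_e \geq \eps \, p^{\binom{q}{2}-1} n^{q-2}$. First, a standard common-neighborhood estimate using $\delta(G) \geq (1-\eps)n$ shows that every edge of $G$ is contained in at least $(1 - O_q(\eps))\binom{n-2}{q-2}$ copies of $K_q$ in $G$; for $\eps$ small enough in terms of $q$, this gives $\mu_e := \Expect{N_e} \geq c_q \, p^{\binom{q}{2}-1} n^{q-2}$ for a constant $c_q > 2\eps$ depending only on $q$. Writing $N_e = \sum_A I_A$ over $K_q$-copies $A$ of $G$ through $e$, where $I_A$ is the indicator that $E(A)\setminus\{e\} \subseteq E(X)$, I would then apply the lower-tail form of Janson's inequality to obtain $\Pr[N_e < \mu_e/2] \leq \exp\bigl(-\Omega(\mu_e^2 / (\mu_e + \bar\Delta_e))\bigr)$, where $\bar\Delta_e = \sum \Pr[I_A I_{A'}]$ ranges over distinct ordered pairs with $E(A)\cap E(A')\not\subseteq\{e\}$.

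The main technical obstacle is thus controlling $\bar\Delta_e$. Grouping pairs by $t := |V(A) \cap V(A')| \in \{3, \dots, q\}$ (the pairs with $t=2$ meet only in $e$ and contribute nothing to $\bar\Delta_e$), the number of ordered pairs with this value of $t$ is $O_q(n^{2q-2-t})$ and each contributes probability $p^{2\binom{q}{2} - \binom{t}{2} - 1}$, so the contribution to $\bar\Delta_e / \mu_e^2$ is $O_q\bigl(n^{-(t-2)} p^{-(\binom{t}{2}-1)}\bigr)$. For $\eps > 0$ chosen small enough (say $\eps < 1/\binom{q}{2}^2$), each such ratio is $n^{-\Omega(1)}$, hence $\bar\Delta_e = o(\mu_e^2)$, and Janson yields $\Pr[N_e < \mu_e/2] \leq \exp(-n^{\Omega(1)})$. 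A union bound over the $\binom{n}{2}$ edges of $G$ then completes the argument, with the constant in the conclusion taken to be, e.g., $\min(c_q/2,\, 1/\binom{q}{2}^2)$. The only real delicacy is the parameter tradeoff: a single $\eps$ must be chosen small enough to make the minimum-degree counting estimate, the Janson variance bound, and the required final constant all work simultaneously.
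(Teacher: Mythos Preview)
Your argument is correct. Note, however, that the paper does not actually prove Lemma~\ref{lem:reservoir}: it is quoted as a black box from~\cite{DPI}, so there is no in-paper proof to compare against. Your approach---take $X$ to be the $p$-random subgraph of $G$, bound degrees by Chernoff, and bound the lower tail of the clique count through each edge by Janson---is the standard one and is presumably close to what~\cite{DPI} does. Two small remarks on the write-up: (i) the worst case in your variance calculation is $t=q$, since $(\binom{t}{2}-1)/(t-2)=(t+1)/2$ is increasing, so the constraint is $\eps<2/(q+1)$ rather than $\eps<1/\binom{q}{2}^2$ (your choice still works, just with room to spare); and (ii) to finish cleanly you should note explicitly that $\mu_e^2/(\mu_e+\bar\Delta_e)\ge \min\{\mu_e/2,\,\mu_e^2/(2\bar\Delta_e)\}=n^{\Omega(1)}$, since both $\mu_e$ and $\mu_e^2/\bar\Delta_e$ are polynomial in $n$ under your choice of~$\eps$.
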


\subsection{The Design Hypergraph}

Rephrasing a decomposition problem in terms of a perfect matching of an auxiliary hypergraph is useful for Step (4). To that end, we have the following definition.

\begin{definition}[Design Hypergraph]
Let $F$ be a hypergraph. If $G$ is a hypergraph, then the \emph{$F$-design hypergraph of $G$}, denoted ${\rm Design}(G,F)$ 
is the hypergraph $\Gamma$ with $V(\Gamma)=E(G)$ and $E(\Gamma) = \{S\subseteq E(G): S \text{ is isomorphic to } F\}$.
\end{definition}

Note that $\mathrm{Design}(K_n, K_q)$ is $\binom{n - 2}{q - 2}$-regular with codegrees at most $\binom{n - 3}{q - 3}$.
For Step (3), we recall the following corollary of the Boosting Lemma of Glock, K\"uhn, Lo and Osthus~\cite{GKLO16}.

\begin{lem}\label{lem:RegBoost}
For every integer $q \geq 3$, there exists $\eps > 0$ such that the following holds for sufficiently large $n$:  If $J$ is an $n$-vertex graph with $\delta(J) \ge (1-\varepsilon)n$, then there exists a subhypergraph $\cD$ of ${\rm Design}(J,K_q)$ such that $d_\cD(v) = (1/2 \pm n^{-(q-2)/3})\left.\binom{n - 2}{q - 2}\middle.\right.$ for all $v\in V(\cD)$.
\end{lem}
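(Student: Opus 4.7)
The plan is to derive this as a direct corollary of the Boosting Lemma of Glock, K\"uhn, Lo, and Osthus~\cite{GKLO16}, which takes a nearly-regular hypergraph with small codegrees and produces a nearly-regular subhypergraph of approximately half the density.

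The key setup step is to verify that $\Gamma := {\rm Design}(J, K_q)$ is itself nearly regular. For any edge $e = uv \in E(J)$, the degree $d_\Gamma(e)$ counts the $K_q$ in $J$ containing $e$, which equals the number of $(q-2)$-cliques inside the common neighbourhood $N_J(u) \cap N_J(v)$. The minimum degree condition $\delta(J)\ge (1-\eps)n$ yields $|N_J(u)\cap N_J(v)| \ge (1-2\eps)n$, and a routine count of $(q-2)$-cliques in an almost-complete set gives
\[
  d_\Gamma(e) = (1 \pm O_q(\eps))\binom{n-2}{q-2}.
\]
At the same time, for any two distinct edges of $J$, the number of $K_q$ of $J$ containing both is at most $\binom{n-3}{q-3} = O(n^{q-3})$, since such a $K_q$ is determined by a choice of at most $q - 3$ further common neighbours. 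In particular the codegree-to-degree ratio in $\Gamma$ is $O(1/n)$, and $\Gamma$ is a $\binom{q}{2}$-uniform hypergraph (which is a constant depending only on $q$).

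Next, I would apply the Boosting Lemma of~\cite{GKLO16} to $\Gamma$. In the form relevant here, the lemma asserts that a nearly $d$-regular hypergraph of bounded uniformity with codegrees at most $D$ admits a subhypergraph $\cD$ in which every vertex has degree $(1/2 \pm \zeta)d$, where $\zeta$ is an explicit polynomially-decaying function of $D/d$. Applied to $\Gamma$, this yields a subhypergraph $\cD$ in which every vertex has degree $(1/2 \pm \zeta)\binom{n-2}{q-2}$, with $\zeta$ a fixed negative power of $n$ (together with an additive $O_q(\eps)$ slack coming from the initial near-regularity of $\Gamma$). Choosing $\eps$ sufficiently small with respect to $q$ and $n$ sufficiently large makes $\zeta \le n^{-(q-2)/3}$, producing the desired $\cD$.

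The main obstacle is really just the quantitative parameter check: one must match the error bound provided by the Boosting Lemma against the specific exponent $(q-2)/3$ appearing in the statement, and simultaneously absorb the $O_q(\eps)$ irregularity of $\Gamma$ into the target error term. Since $(q-2)/3$ is visibly slack compared to the natural $(q-2)/2$ that a Chernoff-style random half-sampling would suggest if $\Gamma$ were exactly regular, this parameter chase is comfortable: with $D/d = O(1/n)$, any fixed polynomial-in-$n$ relative regularity is achievable for $n$ large enough, which is all that is needed.
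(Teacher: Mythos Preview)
Your proposal is correct and matches the paper's own treatment: the paper does not prove this lemma at all but simply states it as ``the following corollary of the Boosting Lemma of Glock, K\"uhn, Lo and Osthus~\cite{GKLO16}'', and your sketch fills in exactly the routine verification (near-regularity of ${\rm Design}(J,K_q)$ from the minimum-degree hypothesis, codegree bound $\binom{n-3}{q-3}$, then invoke the Boosting Lemma) that this citation implicitly requires.
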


\subsection{Nibble with Reserves}

For Step (4), we require the ``nibble with reserves'' theorem, but first a definition.

\begin{definition}[Bipartite Hypergraph]
We say a hypergraph $G=(A,B)$ is \emph{bipartite with parts $A$ and $B$} if $V(G)=A\cup B$ and  every edge of $G$ contains exactly one vertex from $A$. We say a matching of $G$ is \emph{$A$-perfect} if every vertex of $A$ is in an edge of the matching.
\end{definition}

Here is ``nibble with reserves'' from~\cite{DP22}. 

\begin{thm}\label{thm:NibbleReserves}
For every integer $r \ge 2$ and real $\beta \in (0,1)$, there exist an integer $D_{\beta}\ge 0$ and real $\alpha > 0$ such that following holds for all $D\ge D_{\beta}$: 
\vskip.05in
Let $G$ be an $r$-uniform (multi)-hypergraph with codegrees at most $D^{1-\beta}$ such that $G$ is the edge-disjoint union of $G_1$ and $G_2$ where $G_2=(A,B)$ is a bipartite hypergraph such that every vertex of $B$ has degree at most $D$ in $G_2$ and every vertex of $A$ has degree at least $D^{1-\alpha}$ in $G_2$, and $G_1$ is a hypergraph with $V(G_1)\cap V(G_2) = A$ such that every vertex of $G_1$ has degree at most $D$ in $G_1$ and every vertex of $A$ has degree at least $D\left(1- D^{-\beta}\right)$ in $G_1$.

Then there exists an $A$-perfect matching of $G$. 
\end{thm}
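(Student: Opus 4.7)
The plan is a two-phase R\"odl semi-random nibble. The key structural observation I will exploit is that $V(G_1)\cap V(G_2)=A$, so any partial matching built entirely from $E(G_1)$ leaves every vertex of $B$ untouched; the reserve edges of $G_2$ at each uncovered $a\in A$ therefore remain fully available at the end of the $G_1$-phase and can be used to finish. For Phase~1, I would fix $\alpha\ll\beta$ and a small nibble parameter $\gamma$, and run $t=\Theta(\log D)$ semi-random rounds on the evolving residual of $G_1$: in round~$i$, independently retain each surviving edge with probability $\gamma/D_i$ (where $D_i\approx D(1-\gamma)^i$ is the current typical residual degree on $A$), delete any two conflicting retained edges, and add the resulting matching $M^{(i)}$ to a running matching $M_1$, then drop all remaining edges that share a vertex with $M^{(i)}$. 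The codegree bound $D^{1-\beta}$ controls the number of conflicts, so standard concentration estimates (Chernoff for degree drops, Talagrand/Azuma for codegrees and for the number of conflicts) should yield, with positive probability, that every alive vertex of $A$ keeps residual degree $(1\pm o(1))D_i$ through all $t$ rounds and that the uncovered set $A'\subseteq A$ satisfies $|A'|\le |A|/D^{\gamma t}$.

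For Phase~2, I would extend $M_1$ to an $A$-perfect matching by a single greedy pass over $A'$: at each $a\in A'$, select any edge of $G_2$ incident to $a$ whose other $r-1$ vertices in $B$ have not yet been used. Since $M_1$ touches no vertex of $B$, at most $(r-1)|A'|$ vertices of $B$ will have been blocked, and each blocked vertex kills at most $D^{1-\beta}$ of the $\ge D^{1-\alpha}$ edges at $a$ by the codegree hypothesis, so the total number of blocked edges at $a$ is at most $(r-1)|A'|\cdot D^{1-\beta}$. Provided $|A'|<D^{\beta-\alpha}/(r-1)$, a valid extension always exists. If $|A|$ is so large (super-polynomial in $D$) that Phase~1 alone does not reduce $|A'|$ to this size, I would run an auxiliary short nibble on the bipartite hypergraph $G_2[A'\cup B]$ (whose vertices of $A'$ still have degree $\ge D^{1-\alpha}$, with $B$-degrees $\le D$ and codegrees $\le D^{1-\beta}$) to drive $|A'|$ down further before the greedy step. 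Combining $M_1$ with the Phase~2 matching produces the required $A$-perfect matching of $G$.

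The main obstacle will be the Phase~1 concentration analysis, which must simultaneously track across all $t=\Theta(\log D)$ rounds the residual $G_1$-degrees of alive vertices, the residual codegrees of alive pairs, and the size of $A'$. These tracking estimates are by now standard in the R\"odl nibble framework (going back to Pippenger--Spencer and Kahn, and refined in more recent works such as Ehard--Glock--Joos), and the quantitative relationship between $\alpha$ and $\beta$ should fall out of how the concentration errors accumulate round by round; in practice one might even invoke such a black-box nibble theorem rather than rederive the estimates from scratch. Phase~2 is short and robust, but depends essentially on the disjointness $V(G_1)\cap V(G_2)=A$, which is the structural feature distinguishing a ``nibble with reserves'' result from a plain nibble.
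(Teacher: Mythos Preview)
The paper does not prove this theorem; it is quoted as a black box from \cite{DP22}, so there is no in-paper proof to compare against here.

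Your plan has a real gap in Phase~2. The greedy extension needs $|A'| < D^{\beta-\alpha}/(r-1)$, an \emph{absolute} bound in terms of $D$. But the theorem places no bound on $|A|$ (or $|V(G)|$) in terms of $D$: one may have, say, $|A|=2^{D}$. A nibble run for $t=\Theta(\log D)$ rounds shrinks the uncovered set only by a factor polynomial in $D$, so $|A'|\ge |A|/D^{O(1)}$ can remain arbitrarily large. Your fallback---an auxiliary nibble on $G_2[A'\cup B]$---has the same defect: it again reduces $|A'|$ proportionally, never to a size bounded purely by $D$, and you cannot iterate it indefinitely because each pass through $G_2$ consumes reserve edges and erodes the $A'$-side degree below $D^{1-\alpha}$ after $O_{\alpha}(1)$ iterations. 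So the greedy finish is unreachable in general.

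The missing idea is that Phase~1 should control not the \emph{size} of $A'$ but its \emph{spread} relative to $G_2$: alongside the residual $G_1$-degrees, you must track through the nibble that for every $b\in B$ the number of $G_2$-edges whose $A$-endpoint remains uncovered falls from at most $D$ to at most $D^{1-c'}$ for some $c'>\alpha$. After Phase~1 the residual reserve hypergraph $G_2'$ then has minimum $A'$-degree $\ge D^{1-\alpha}$ exceeding its maximum $B$-degree $\le D^{1-c'}$. At that point a \emph{local} argument finishes regardless of $|A'|$: let each $a\in A'$ pick a uniformly random $G_2'$-edge; for any fixed $a$ the expected number of conflicting picks is at most $(r-1)\cdot D^{1-c'}/D^{1-\alpha}=(r-1)D^{\alpha-c'}=o(1)$, so the Lov\'asz Local Lemma (or an equivalent second nibble on $G_2'$) yields an $A'$-perfect matching.
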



\subsection{The Lov\'asz Local Lemma Distribution}

To prove that spread boosters may be embedded disjointly in a manner almost as spread as if they were embedded independently at random, we require the following variant of the Lov\'asz Local Lemma~\cite{EL73} for the conditional distribution (see Haeupler, Saha, and Srinivasan~\cite{HSS11} and Jain and Pham~\cite{JP22}).

\begin{lem}\label{lem:conditionalLLL}
    Let $\{X_i\}_{i\in I}$ be independent random variables, and let $\{\cE_j\}_{j\in J}$ be events where each $\cE_j$ depends on a subset $S_j \subseteq I$ of the variables.  Let $\Gamma$ be a graph with vertex set $J$ where $S_j \cap S_{j'} = \emptyset$ for all distinct non-adjacent $j,j'\in J$.  Let $\mathbb P$ be the product measure on the random variables $\{X_i\}_{i\in I}$, and let $\cE$ be an event depending on a subset $S \subseteq I$.
    If $\Prob{\cE_j} \leq p$ for all $j \in J$ and $4p\Delta(\Gamma) \leq 1$, then 
    \begin{equation*}
        \ProbCond{\cE}{\bigcap_{j \in J}\overline{\cE_j}} \leq \Prob{\cE}\exp(6pN),
    \end{equation*}
    where $N$ is the number of events $\cE_j$ with $S_j \cap S \neq \emptyset$.
\end{lem}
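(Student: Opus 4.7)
The plan is to split the target ratio
$$\ProbCond{\cE}{\bigcap_{j \in J}\overline{\cE_j}} = \frac{\Prob{\cE \cap \bigcap_j \overline{\cE_j}}}{\Prob{\bigcap_j \overline{\cE_j}}}$$
along the partition $J = T \cup (J\setminus T)$, where $T := \{j \in J : S_j \cap S \neq \emptyset\}$, so $|T| = N$. The guiding intuition is that the non-touching events $\{\cE_j : j \notin T\}$ behave as if independent of $\cE$ and can be discarded cleanly in the numerator, while the $N$ touching events introduce conditioning that is handled by the LLL hypothesis $4p\Delta(\Gamma)\le 1$.

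For the numerator, I would drop the events indexed by $T$ to obtain the upper bound
$$\Prob{\cE \cap \bigcap_{j \in J} \overline{\cE_j}} \leq \Prob{\cE \cap \bigcap_{j \notin T} \overline{\cE_j}}.$$
Each $\cE_j$ with $j \notin T$ depends only on variables $(X_i)_{i \in I \setminus S}$, whereas $\cE$ depends only on $(X_i)_{i \in S}$, so these two events are genuinely independent under the product measure and the right-hand side factors as $\Prob{\cE}\cdot \Prob{\bigcap_{j \notin T}\overline{\cE_j}}$. For the denominator, I would decompose
$$\Prob{\bigcap_{j \in J} \overline{\cE_j}} = \Prob{\bigcap_{j \notin T} \overline{\cE_j}} \cdot \Prob{\bigcap_{j \in T} \overline{\cE_j} \;\middle|\; \bigcap_{j \notin T} \overline{\cE_j}}.$$
The $\Prob{\bigcap_{j \notin T} \overline{\cE_j}}$ factor then cancels with its counterpart in the numerator, reducing the whole problem to bounding the remaining conditional probability from below by $\exp(-6pN)$.

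This last step is where the LLL hypothesis actually enters, and it is the only nontrivial ingredient. Enumerate $T = \{j_1,\dots,j_N\}$ arbitrarily and telescope via the chain rule, so that the target conditional probability becomes a product of $N$ conditional probabilities of the form $\ProbCond{\overline{\cE_{j_\ell}}}{\text{some collection of good events}}$. Under $4p\Delta(\Gamma)\le 1$, the standard conditional form of the symmetric LLL (as used in \cite{HSS11, JP22}) gives $\ProbCond{\cE_j}{\bigcap_{k \in R}\overline{\cE_k}}\le 2p$ for every $j$ and every $R\not\ni j$; hence each of the $N$ factors is at least $1-2p$, and the product is at least $(1-2p)^N \ge \exp(-4pN)$ (using $p \leq 1/(4\Delta) \le 1/4$ and $\ln(1-x) \ge -2x$ for $x \in [0,1/2]$). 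Combined with the numerator bound this yields $\Prob{\cE}\exp(4pN) \leq \Prob{\cE}\exp(6pN)$, as desired. I expect the only real work to be locating or quickly rederiving the conditional LLL estimate above; everything else is bookkeeping, and the comfortable gap between the constants $4$ and $6$ in the exponent confirms that the hypothesis $4p\Delta(\Gamma)\le 1$ is more than enough.
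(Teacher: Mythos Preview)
The paper does not actually prove this lemma; it is stated in the Preliminaries section with a citation to Haeupler--Saha--Srinivasan~\cite{HSS11} and Jain--Pham~\cite{JP22}. Your argument is correct and is precisely the standard one: split $J$ into the ``touching'' set $T$ and its complement, use genuine product-measure independence to peel off $\Prob{\cE}\cdot\Prob{\bigcap_{j\notin T}\overline{\cE_j}}$ in the numerator, cancel against the corresponding factor in the denominator, and then lower-bound the remaining conditional probability $\ProbCond{\bigcap_{j\in T}\overline{\cE_j}}{\bigcap_{j\notin T}\overline{\cE_j}}$ by $(1-2p)^N$ via the inductive estimate $\ProbCond{\cE_j}{\bigcap_{k\in R}\overline{\cE_k}}\le 2p$ from the symmetric LLL proof under $4p\Delta(\Gamma)\le 1$. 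The only (harmless) implicit assumption you make is $\Delta(\Gamma)\ge 1$ when deducing $p\le 1/4$; the degenerate case $\Delta(\Gamma)=0$ is easier since then the telescoping factors are each at least $1-p$ directly by independence.
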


\section{Spread Omni-Absorber Theorem}\label{s:SpreadOmniAbsorber}

The main ingredient in our proof of Theorem~\ref{thm:spread-decomposition} is a ``spread'' version of Theorem~\ref{thm:omni-absorber} as follows.

\begin{thm}[Refined Omni-Absorber Theorem for Graphs - Spread Version]\label{thm:OmniSpread}
    For every integer $q \ge 3$, the following holds for sufficiently large $n,C\ge 1$ such that $C \leq \sqrt{n} / {\log n}$.
    If $X$ is a spanning subgraph of $G \cong K_n$ with $\Delta(X) \le n / C$, then there exists a probability distribution over $K_q$-omni-absorbers $A \subseteq G$ for $X$ with decomposition family $\cH_A$ and decomposition function $\cQ_A$ such that $\Delta(A)\le C \cdot \max \left\{\Delta(X), \sqrt{n}\log n\right\}$ and 
    \begin{equation*}
        \Prob{\bigcup_{L\subseteq X}\{\cS \subseteq \cQ_A(L)\}} \le \left(\frac{1}{C^{1/4}n^{(q - 6)/2}}\right)^{|\cS|} \text{ for all $K_q$-packings $\cS$ of $G$.}
    \end{equation*}
\end{thm}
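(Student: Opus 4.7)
The plan is to use Theorem~\ref{thm:omni-absorber} as a black box and then boost the spread by randomly replacing each clique in its decomposition family with a $K_q$-booster, following the template in Section~\ref{s:Overview}. First I would apply Theorem~\ref{thm:omni-absorber} with a constant $C_0 = C_0(q)$ satisfying $C_0 \ll C^{1/2}$ to obtain a $C_0$-refined $K_q$-omni-absorber $A_0$ for $X$ inside $G$ with decomposition family $\cH_0$, decomposition function $\cQ_{A_0}$, and $\Delta(A_0) \le C_0 \cdot \Delta$ where $\Delta := \max\{\Delta(X), \sqrt{n}\log n\}$. For each $Q \in \cH_0$ I would then independently embed a $K_q$-booster $B_Q$ rooted on $Q$, chosen uniformly at random from all embeddings into $V(G)$; by Lemma~\ref{lem:sparse-boosters-exist}, each $B_Q$ carries two edge-disjoint $K_q$-decompositions $\cB^\circ_Q \ni Q$ and $\cB^\filledcirc_Q$ of rooted density $2/(q-2)$. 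To make the boosters fit together edge-disjointly, I would apply Lemma~\ref{lem:conditionalLLL} to condition on the good event that the non-root edges of distinct boosters are pairwise disjoint and disjoint from $E(X \cup A_0)$, which inflates the relevant marginals by only a bounded multiplicative factor.

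On this good event, define
\[
A := A_0 \cup \bigcup_{Q \in \cH_0}\bigl(E(B_Q) \setminus E(Q)\bigr), \qquad \cH_A := \bigcup_{Q \in \cH_0}\bigl((\cB^\circ_Q \setminus \{Q\}) \cup \cB^\filledcirc_Q\bigr),
\]
and for $K_q$-divisible $L \subseteq X$ set
\[
\cQ_A(L) := \bigcup_{Q \in \cQ_{A_0}(L)} \cB^\filledcirc_Q \;\cup\; \bigcup_{Q \in \cH_0 \setminus \cQ_{A_0}(L)}\bigl(\cB^\circ_Q \setminus \{Q\}\bigr).
\]
A routine edge-by-edge check using the disjointness of the boosters confirms that $\cQ_A(L)$ is a valid $K_q$-decomposition of $L \cup A$ for every $K_q$-divisible $L \subseteq X$. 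The degree bound $\Delta(A) \le C \cdot \Delta$ follows from $\Delta(A_0) \le C_0 \Delta$, the $C_0$-refinement (each vertex is in $O_q(\Delta(A_0))$ cliques of $\cH_0$), and the fact that each booster contributes only $O_q(1)$ edges per vertex.

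The main obstacle is the spread bound. Since $\cS \subseteq \cQ_A(L)$ for some $L$ implies $\cS \subseteq \cH_A$, it suffices to bound $\Prob{\cS \subseteq \cH_A}$. For each $K \in \cS$ and each $Q \in \cH_0$, the probability that $K$ is the image of an abstract non-root clique $K^*$ of $B_Q$ under the random embedding is $O_q(n^{-(q-j)})$, where $j := |V(K) \cap V(Q)|$ coincides with the root overlap of $K^*$. The rooted-density $2/(q-2)$ input from Lemma~\ref{lem:sparse-boosters-exist} controls the number and root-overlap structure of non-root cliques in the booster, so that only $O_q(1)$ abstract cliques per value of $j$ are considered. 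Bounding the number of $Q \in \cH_0$ with $|V(Q) \cap V(K)| = j$ via $C_0$-refinement — $|\cH_0| = O_q(n^2 C_0)$ for $j = 0$, $O_q(\Delta C_0)$ for $j = 1$, and $O_q(C_0)$ for $j \ge 2$ (fixing any shared edge) — combined with $\Delta \le n/C$ yields the per-clique marginal bound $\Prob{K \in \cH_A} \le C^{-1/4} n^{-(q-6)/2}$; the $(q-6)/2$ exponent is the worst case over $j$ once these counts are combined with the probability per $(Q, K^*)$.

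Finally, I would combine the per-clique bounds into the joint bound $(C^{1/4}n^{(q-6)/2})^{-|\cS|}$ using the independence of the booster embeddings: in the unconditional product measure, the events $\{K \in \cH_A\}$ for $K \in \cS$ hosted by distinct boosters are independent, and the rare case of several cliques of $\cS$ sharing one host booster is controlled jointly via the $O_q(1)$ total cliques per booster. The conditioning on pairwise booster-disjointness contributes at most an $\exp(O(|\cS|))$ multiplicative factor through Lemma~\ref{lem:conditionalLLL} (with $N = O(|\cS|)$ relevant bad events and $p$ tiny), which is absorbed into the $C^{1/4}$ slack, completing the argument.
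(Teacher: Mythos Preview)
Your high-level strategy coincides with the paper's: start from the $C_0$-refined omni-absorber $A_0$ of Theorem~\ref{thm:omni-absorber}, attach a random rooted $K_q$-booster (from Lemma~\ref{lem:sparse-boosters-exist}) to each $Q\in\cH_0$, and use Lemma~\ref{lem:conditionalLLL} to make the boosters disjoint while barely distorting the product measure. The paper in fact routes this through an intermediate step, Lemma~\ref{lemma:spread-clique-embedding}, which first embeds a partial $K_{q+b}$ for each $H$ with a clean vertex-spread bound and a built-in bound $\Delta(T)\le C\Delta/2$, and only then places the booster inside. Your direct booster embedding is fine in principle, but two steps in your write-up do not go through as stated.

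First, the degree bound. Conditioning only on edge-disjointness of the boosters does not control how many boosters use a given vertex $v$ as a \emph{non-root} vertex: edge-disjointness just says the edges at $v$ coming from different boosters are distinct, which still allows $v$ to lie in arbitrarily many boosters. Your sentence ``each booster contributes only $O_q(1)$ edges per vertex'' bounds the per-booster contribution, not the number of boosters through $v$. The paper handles this inside Lemma~\ref{lemma:spread-clique-embedding} via a second family $J_1$ of bad events (together with an auxiliary colouring $i_e\in[D]$) that caps the number of partial cliques through any non-root vertex; your LLL needs an analogous device.

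Second, and more significantly, the joint spread bound. Your reduction ``it suffices to bound $\Prob{\cS\subseteq\cH_A}$'' discards exactly the structure that makes the rooted-density input from Lemma~\ref{lem:sparse-boosters-exist} bite. In any $\cQ_A(L)$, each booster contributes cliques from a \emph{single} decomposition $\cB^{*_Q}_Q$, so when several $K\in\cS$ land in one booster they lie in one $\cB^{*_Q}$ and Definition~\ref{def:RootedDensity} forces at least $(q-2)|\cS_Q|/2$ non-root vertices for that group; this is where the exponent $(q-2)/2$ comes from. If you only ask for $\cS\subseteq\cH_A$, the cliques in one booster may come from both decompositions, and rooted density (which is stated per decomposition) no longer gives $(q-2)|\cS_Q|/2$; one loses a factor of $2$ in the exponent, which is fatal for the stated bound once $q\ge 4$. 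Relatedly, ``multiply per-clique marginals using independence across distinct host boosters'' is not a proof: the host booster of each $K$ is itself random, so the events $\{K\in\cH_A\}$ are not independent, and the shared-booster case is not handled by the observation that a booster has $O_q(1)$ cliques. The paper instead takes a union bound over assignments $((\cS_H,*_H):H\in\cH_{A'})$ with one $*_H$ per $H$, bounds the number of assignments by $(2|\cH_{A'}|)^{|\cS|}\le (n^2/C^{3/4})^{|\cS|}$, and for each fixed assignment uses the vertex-spread of Lemma~\ref{lemma:spread-clique-embedding} together with rooted density to get $(\sqrt C/n^{(q-2)/2})^{|\cS|}$; the product is exactly $(C^{-1/4}n^{-(q-6)/2})^{|\cS|}$.
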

Here, the event $\bigcup_{L\subseteq X}\{\cS \subseteq \cQ_A(L)\}$ is the event that there exists some $K_q$-divisible $L \subseteq X$ for which $\cS$ is contained in the $K_q$-decomposition $\cQ_A(L)$ of $A \cup L$, where $A$ is a random $K_q$-omni-absorber.
Throughout this section, we use $A'$ to refer to an omni-absorber for $X$ with decomposition family $\cH_{A'}$ and decomposition function $\cQ_{A'}$ obtained by applying Theorem~\ref{thm:omni-absorber}.  To prove Theorem~\ref{thm:OmniSpread}, we use the Lov\'asz Local Lemma distribution (see Lemma~\ref{lem:conditionalLLL}) to find a distribution on a random omni-absorber $A$ obtained by replacing each copy of $K_q$ in $\cH_{A'}$ with a spread booster and updating the decomposition function accordingly.

\subsection{Spread Boosters}

When we randomly replace a copy of $K_q$ in the decomposition family $\cH_{A'}$ with a spread booster, it will be helpful to work with the following notion of a rooted booster.

\begin{definition}\label{def:Booster}Let $q > 1$ be an integer.
   \begin{itemize}
       \item A \textit{$K_q$-booster} is a graph $B$ along with two $K_q$-decompositions $\cB^{\circ}$ and $\cB^{\filledcirc}$ of $B$ such that $\cB^{\circ} \cap \cB^{\filledcirc} = \emptyset$.
       \item A tuple $(B, \cB^{\circ}, \cB^{\filledcirc}, R)$ is a \textit{rooted $K_q$-booster} if
       \begin{itemize}
           \item $B$ is a graph,
           \item $\cB^{\filledcirc}$ is a $K_q$-decomposition of $B$, called the \textit{off-decomposition},
           \item $R \cong K_q$, $V(R) \subseteq V(B)$, and $R$ is edge-disjoint from $B$, and
           \item $\cB^{\circ}$ is a $K_q$-decomposition of $B \cup R$ with $R \notin \cB^{\circ}$, called the \textit{on-decomposition}.
       \end{itemize}
   \end{itemize}    
\end{definition}
Note that if $(B, \cB^{\circ}, \cB^{\filledcirc}, R)$ is a rooted $K_q$-booster, then $B \cup R$ is a $K_q$-booster with decompositions $\cB^{\circ}$ and $\cB^{\filledcirc}\cup\{R\}$.  Similarly, if $B$ is a booster with $K_q$-decompositions $\cB^{\circ}$ and $\cB^{\filledcirc}$, then for any $R \in \cB^{\filledcirc}$, we have that $(B - E(R), \cB^{\circ}, \cB^{\filledcirc}\setminus\{R\}, R)$ is a rooted $K_q$-booster.  Moreover, if $A$ is a $K_q$-omni-absorber for $X$ with decomposition family $\cH$ and decomposition function $\cQ$ and $(B, \cB^{\circ}, \cB^{\filledcirc}, H)$ is a rooted $K_q$-booster for $H \in \cH$ such that $B$ is edge-disjoint from $A$, then $A \cup B$ is also a $K_q$-omni-absorber for $X$ with decomposition family $\cH \cup \cB^{\circ} \cup \cB^{\filledcirc} \setminus\{H\}$ and decomposition function 
\begin{equation*}
    L \mapsto \left\{\begin{array}{l l}
      \cQ(L) \cup \cB^{\circ} \setminus \{H\} & \text{if } H \in \cQ(L)\\
      \cQ(L) \cup \cB^{\filledcirc} & \text{otherwise.}
    \end{array}\right.
\end{equation*}

In the proof of Theorem~\ref{thm:OmniSpread}, we will find a random rooted booster $(B_H, \cB^{\circ}_H, \cB^{\filledcirc}_H, V(H))$ for every $H \in \cH_{A'}$
and apply the above operation for each $H \in \cH_{A'}$ to get a new omni-absorber $A$ satisfying the spread property.
The exponent in the spreadness corresponds to the following parameter of the spread booster.

\begin{definition}\label{def:RootedDensity} Let $q > 1$ be an integer.
\begin{itemize}
    \item For a $K_q$-packing $\cH$ of a graph $G$ and $R \subseteq V(G)$, the \textit{rooted density} of $\cH$ and $R$ is
    \begin{equation*}
        d(\cH, R) \coloneqq \left. |\cH| \middle/ \left|\bigcup_{H\in \cH}V(H)\setminus R\right|,\right.
    \end{equation*}
    and the \textit{maximum rooted density} of $\cH$ and $R$ is
    \begin{equation*}
        m(\cH, R) \coloneqq \max_{\cH' \subseteq \cH}d(\cH', R).
    \end{equation*}
    \item For a rooted $K_q$-booster $(B, \cB^{\circ}, \cB^{\filledcirc}, R)$, the \textit{rooted density} is $
        \max_{\cH \in \{\cB^{\circ}, \cB^{\filledcirc}\}}m(\cH, R)$.
\end{itemize}
\end{definition}

Suppose vertices of a spread booster $(B, B^\circ, B^{\filledcirc}, H)$ were chosen independently with probability $1 / n$.  Then the probability some $K_q$-packing $\cS$ is contained in $\cB^\circ$ or $\cB^{\filledcirc}$ is at most $1 / n^{|\bigcup_{S\in \cS}V(S)\setminus V(H)|} = \left(1 / n^{1/d(\cS, V(H))}\right)^{|\cS|}$, and $d(\cS, V(H))$ is at most the rooted density of the booster if this probability is nonzero.  Hence, it is key to construct spread boosters with small rooted density, as follows.

\begin{lem}\label{lem:sparse-boosters-exist}
  For every integer $q > 2$, there exists a rooted $K_q$-booster with rooted density at most $2/(q-2)$.
\end{lem}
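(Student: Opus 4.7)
The plan is to give an explicit construction of the rooted booster that generalises the Pasch-style configuration available for $q = 3$. Indeed, for $q = 3$ one can take $R = \{v_1, v_2, v_3\}$ together with three new vertices $\{u_1, u_2, u_3\}$, and set
\begin{equation*}
\cB^{\filledcirc} = \{\{v_1, u_1, u_2\},\, \{v_2, u_1, u_3\},\, \{v_3, u_2, u_3\}\},\quad \cB^{\circ} = \{\{v_1, v_2, u_3\},\, \{v_1, v_3, u_2\},\, \{v_2, v_3, u_1\},\, \{u_1, u_2, u_3\}\}.
\end{equation*}
Direct inspection shows that both collections are $K_3$-decompositions of the appropriate graphs, $\cB^{\circ}\cap\cB^{\filledcirc} = \emptyset$, and the rooted density is $4/3 \leq 2 = 2/(q-2)$.

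For general $q > 3$ the plan is a two-phase construction. In phase one, for every pair $\{i,j\}\in\binom{[q]}{2}$ I introduce a private set $W_{ij}$ of $q - 2$ fresh vertices and put the clique $H_{ij} = \{v_i, v_j\}\cup W_{ij}$ into $\cB^{\circ}$. These $\binom{q}{2}$ cliques cover every $R$-edge exactly once and consume $\binom{q}{2}(q-2)$ exclusive $U$-vertices. In phase two, I cover the remaining edges of $B\cup R$ (the mixed $v_i$--$W_{ij}$ edges and the internal $W_{ij}$ edges) by adding link cliques to $\cB^{\filledcirc}$ and a small number of pure-$U$ cliques to both decompositions. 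Concretely, for each $v_i$ I partition its $U$-neighbourhood $\bigcup_{j\neq i} W_{ij}$ (of size $(q-1)(q-2)$) into $q - 2$ blocks of size $q - 1$, arranged so that each block induces a $K_{q-1}$ in $B$; the corresponding off-decomposition cliques then have the form $\{v_i\} \cup K_{q-1}$. The cross-pair $U$-$U$ edges introduced by these partitions are covered in $\cB^{\circ}$ by symmetrically placed pure-$U$ cliques, together with any matching pure-$U$ cliques needed in $\cB^{\filledcirc}$; existence of the required ancillary structure reduces to finding a $K_q$-decomposition of an explicit $K_q$-divisible gadget of size $O(q^2)$, which follows from Wilson's theorem.

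Once the construction is in place, the rooted density bound follows from a one-line double count. The design ensures that every clique in either decomposition contains at least $q - 2$ $U$-vertices, and that every $U$-vertex lies in at most two cliques of each decomposition. Therefore for every $\cH' \subseteq \cB^{\circ}$ (resp.\ $\cB^{\filledcirc}$) we have
\begin{equation*}
(q-2)\,|\cH'| \;\leq\; \sum_{H \in \cH'}|V(H)\cap U| \;\leq\; 2\,|U(\cH')|,
\end{equation*}
which rearranges to $d(\cH', V(R)) \leq 2/(q-2)$, as required.

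The main obstacle is the combinatorial engineering of phase two. Each $w \in W_{ij}$ simultaneously lies in the $U$-neighbourhoods of both $v_i$ and $v_j$, so the $K_{q-1}$-substructures chosen at the various $R$-vertices must be globally compatible; moreover, $K_q$-divisibility forces careful balancing of how many cross-pair edges are incident to each $w$, and of how many pure-$U$ cliques must be added to maintain edge-disjointness of $\cB^{\circ}$ and $\cB^{\filledcirc}$. If the direct construction above proves too delicate to specify uniformly in $q$, a natural fallback is a transformer-based reduction: first exhibit a rooted $K_q$-booster for a simpler target graph (for instance a disjoint union of $K_{q-1}$'s) with density at most $2/(q-2)$, and then build a transformer that converts $R$ into that target with compatible density, so that the resulting concatenated gadget still satisfies the desired bound.
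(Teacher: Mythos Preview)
Your proposal has a genuine gap in Phase~2 for $q > 3$. You correctly identify this as ``the main obstacle'' but do not actually resolve it: you assert that the remaining edges can be covered by appropriately chosen link cliques and pure-$U$ cliques, and that ``existence of the required ancillary structure reduces to finding a $K_q$-decomposition of an explicit $K_q$-divisible gadget of size $O(q^2)$, which follows from Wilson's theorem.'' This appeal is inapplicable on two counts. First, Wilson's theorem concerns $K_q$-decompositions of $K_n$ (for $n$ sufficiently large and divisible), not of arbitrary $K_q$-divisible graphs; for general host graphs you would need something like the Glock--K\"uhn--Lo--Osthus or Keevash machinery, and those also require the host to be large and dense relative to $q$, which a gadget of size $O(q^2)$ is not. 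Second, and more fundamentally, even if some $K_q$-decomposition of the leftover gadget existed, your density argument requires the specific structural property that \emph{every $U$-vertex lies in at most two cliques of each decomposition}. A black-box decomposition gives no such control, so your double-count $(q-2)|\cH'| \leq 2|U(\cH')|$ is unjustified. (As a small aside, your $q=3$ example has a typo: the off-decomposition triples as written use the non-edge $v_1u_1$; you want $\cB^{\filledcirc} = \{\{v_1,u_2,u_3\},\{v_2,u_1,u_3\},\{v_3,u_1,u_2\}\}$.)

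The paper takes a different and fully explicit route that sidesteps these issues. It first constructs a concrete booster on $(q-1)^2 + 2$ vertices: the Cartesian product $K_{q-1}\,\square\,K_{q-1}$ with two added dominating vertices $v_1,v_2$, whose two natural $K_q$-decompositions come from extending the ``horizontal'' and ``vertical'' $K_{q-1}$-classes by $v_1$ or $v_2$ in the two possible ways. This booster has two distinguished cliques $S_1,S_2$ sharing $q-1$ vertices, and one can verify directly that $m(\cB_1\setminus\{S_1\},V(S_1))\leq 2/(q-2)$ and $m(\cB_2\setminus\{S_2\},V(S_1)\cup V(S_2))\leq 2/(q-2)$. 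The paper then \emph{iteratively glues} fresh copies of this gadget along $S_2$ to push the special clique one vertex further from the root at each step; after $q-1$ iterations the special clique is disjoint from the root and can be absorbed into the density count, yielding the rooted booster. The advantage of this approach is that every step is explicit and the density bounds are checked by a short case analysis on horizontal versus vertical cliques, with no appeal to any external decomposition theorem.
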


We defer the proof of Lemma~\ref{lem:sparse-boosters-exist} to Section~\ref{sec:spread-boosters}.

\subsection{Vertex-Spread Clique Embedding}

A \textit{partial $K_k$ rooted at $R$} is a graph $T$ with $R \subseteq V(T)$, $|V(T)| = k$, and $E(T) = \binom{V(T)}{2}\setminus \binom{R}{2}$.
We will need to embed spread boosters for each $H \in \cH_{A'}$ so that they are pairwise edge-disjoint and collectively have small maximum degree.  It will be convenient to first embed partial cliques with the same property and choose boosters inside the partial cliques.  Using conditional Lov\'asz Local Lemma, we can find such an embedding in which vertices are chosen in partial cliques in a spread way, as follows.
In this lemma, $\Delta_1(\cH)$ is the maximum $1$-degree of the hypergraph $\cH$; that is, the maximum number of edges containing a given vertex.

\begin{lem}\label{lemma:spread-clique-embedding}
    For all integers $q \geq 3$ and $b \geq 1$, the following holds for sufficiently small $\eps> 0$ and sufficiently large $C > 0$.
    Let $G$ be an $n$-vertex graph with $\delta(G) \geq (1-\varepsilon)n$.
    If $\cH$ is a $q$-uniform hypergraph with vertex set $V(G)$ such that $\Delta_1(\cH) \leq n / C$, then there exists a probability distribution on pairwise edge-disjoint subgraphs $(T_e : e \in E(\cH))$ of $G$ such that $T_e$ is a partial $K_{q+b}$ rooted at $V(e)$ satisfying $\Delta(T) \leq C\Delta_1(\cH)$, where $T \coloneqq \bigcup_{e \in E(\cH)}T_e$, such that
    \begin{equation*}
        \Prob{S_e \subseteq V(T_e)\setminus V(e)~\forall e \in E(\cH)} \leq \left(\frac{(3b)^b}{n}\right)^{\sum_{e\in E(\cH)}|S_e|} \text{ for every } (S_e \subseteq V(G) : e \in E(\cH)). 
    \end{equation*}
\end{lem}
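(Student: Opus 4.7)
\medskip

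\noindent\textbf{Proof plan.} My plan is to apply the conditional Lovász Local Lemma (Lemma~\ref{lem:conditionalLLL}) to an independent uniform sampling of the sets $X_e$. For each $e \in E(\cH)$, independently sample $X_e$ uniformly at random as a $b$-subset of the common neighborhood $N_G(V(e))$ of $V(e)$ in $G$. Since $\delta(G) \geq (1-\eps)n$ we have $|N_G(V(e))| \geq (1-q\eps)n - q \geq 2n/3$ for $\eps$ sufficiently small, so $V(e) \cup X_e$ spans a clique in $G$ and the partial $K_{q+b}$ $T_e$ on $V(e) \cup X_e$ is automatically contained in $G$. Under the product measure, independence and a direct estimate give
\[
    \Prob{S_e \subseteq X_e \text{ for all } e \in E(\cH)} \;\leq\; \prod_e \left(\frac{3b}{n}\right)^{|S_e|} \;=\; \left(\frac{3b}{n}\right)^{\sum_e |S_e|},
\]
which already beats the target bound $((3b)^b/n)^{\sum_e |S_e|}$ by a factor of $(3b)^{(b-1)\sum_e |S_e|}$, leaving generous slack to absorb the LLL correction.

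To enforce pairwise edge-disjointness of the $T_e$'s and the degree bound $\Delta(T) \leq C\Delta_1(\cH)$, I define two kinds of bad events to be avoided. For each unordered pair of distinct $e, e' \in E(\cH)$, let $B_{e,e'}$ be the \emph{conflict event} that $T_e$ and $T_{e'}$ share some edge; for each $v \in V(G)$, let $D_v$ be the \emph{degree event} that more than a sufficiently large constant (depending on $q, b, C$) times $\Delta_1(\cH)$ of the sets $X_e$ contain $v$. A short case analysis on the intersection pattern of $V(e), V(e'), X_e, X_{e'}$ gives $\Prob{B_{e,e'}} \leq O_{q,b}(n^{-1})$ when $V(e) \cap V(e') \neq \emptyset$ and $\Prob{B_{e,e'}} \leq O_{q,b}(n^{-2})$ otherwise, while a Chernoff bound on the independent indicators $\{v \in X_e\}_{e \in E(\cH)}$ shows $\Prob{D_v}$ is subpolynomial in $n$ for the threshold chosen large enough.

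The main technical obstacle is verifying the LLL hypothesis $4p\Delta(\Gamma) \leq 1$, since the naive dependency graph on all conflict events has maximum degree $\Theta(|E(\cH)|) = \Theta(n^2/C)$, which is too large against the probability bound $p = \Theta(n^{-1})$ forced by the overlapping pairs. My plan is to refine the conflict events according to the ``type'' of overlap they require. For overlapping pairs I keep the single event $B_{e,e'}$, noting that only $O_{q}(\Delta_1(\cH))$ such events involve any fixed $e$; for disjoint pairs I further index by the specific pair $\{u,v\}$ of vertices that would witness a shared edge and by which of the four regions $V(e), X_e, V(e'), X_{e'}$ each of $u$ and $v$ lies in, obtaining a finite list of sub-families in which the per-event probability and the per-event dependency degree balance in such a way that each family satisfies the LLL inequality with slack $O_{q,b}(1/C)$. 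The sparsity hypothesis $\Delta_1(\cH) \leq n/C$ and the fact that $|N_G(V(e))| \geq 2n/3$ are used throughout to limit how many sub-events any single $X_e$ participates in; a stratified application of Lemma~\ref{lem:conditionalLLL} (one sub-family at a time, or equivalently an asymmetric book-keeping in a single application) then handles that $p$ in the statement is uniform.

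Finally, applying the conditional version of the LLL with $\cE = \{S_e \subseteq X_e \text{ for all } e\}$ yields
\[
    \ProbCond{\cE}{\text{all bad events avoided}} \;\leq\; \Prob{\cE} \cdot \exp(6pN),
\]
where $N$ counts bad events whose support meets $\{e : S_e \neq \emptyset\}$. Because the bad events are \emph{local} in the sense that only $O_{q,b}(\Delta_1(\cH))$ units (in the appropriate weighted count) involve any fixed $X_e$, we get $6pN \leq O_{q,b}(C^{-1}) \cdot \sum_e |S_e|$, which is at most $(b-1)\ln(3b) \cdot \sum_e |S_e|$ for $C$ sufficiently large (and at most $\ln(3b/2) \cdot \sum_e|S_e|$ in the tightest case $b=1$). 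Hence the conditional probability is at most $(3b/n)^{\sum_e |S_e|} \cdot (3b)^{(b-1)\sum_e |S_e|} = ((3b)^b/n)^{\sum_e |S_e|}$, the desired spread bound. Positive probability of the good event follows from the standard LLL, so conditioning on it yields the required distribution on pairwise edge-disjoint partial cliques.
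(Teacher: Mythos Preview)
Your overall strategy—sample the extensions $X_e$ independently and uniformly, then condition on a Lov\'asz Local Lemma event to enforce edge-disjointness and a degree bound—matches the paper. The refinement of the pairwise conflict events by the witnessing edge is also in the spirit of the paper's approach, which goes further and indexes each bad event by the full pair of clique choices $(T_1,T_2)$.

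The genuine gap is in your treatment of the degree events $D_v$. As you define them, $D_v$ depends on \emph{every} variable $X_e$ (any $e$ could contribute $v$ to its extension). Consequently, in the dependency graph $\Gamma$, each $D_v$ is adjacent to every conflict event, so $\Delta(\Gamma)$ is at least the total number of conflict events. Even with your maximal refinement, that count is of order $|E(\cH)|^2 n^{2b-2}\ge \Delta_1(\cH)^2 n^{2b}$, while the largest event probability (coming from conflict events) is of order $n^{-2b}$; thus $p\Delta(\Gamma)\gtrsim \Delta_1(\cH)^2$, which is nowhere near $\le 1/4$. An asymmetric LLL does not rescue this: in the condition for $D_v$ you must absorb the product $\prod_{j\sim D_v}(1-x_j)$ over all conflict events, which is $\exp(-\Theta(\Delta_1(\cH)^2))$, whereas Chernoff only gives $\Prob{D_v}\le \exp(-\Theta(\Delta_1(\cH)))$. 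Your ``stratified application'' remark does not address this, and Lemma~\ref{lem:conditionalLLL} as stated is symmetric in any case.

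The paper avoids this obstruction by a device you are missing: it introduces an auxiliary uniform index $i_e\in[D]$ with $D=\Theta(C\Delta_1(\cH))$ for each $e$, and replaces the global event $D_v$ by the family of \emph{pairwise} events ``$v\in V(T_{e_1})\setminus V(e_1)$, $v\in V(T_{e_2})\setminus V(e_2)$, and $i_{e_1}=i_{e_2}$''. If none of these hold, then for each fixed index value at most one $T_e$ places $v$ in its extension, so $v$ lies in at most $D$ extensions, giving $\Delta(T)\le C\Delta_1(\cH)$. These pairwise events are local (each depends on only two $X_e$'s and two $i_e$'s), have probability $O(n^{-2b}D^{-2})$, and have dependency degree $O(D^2 n^{2b}/C)$, so the symmetric hypothesis $4p\Delta(\Gamma)\le O_{q,b}(1/C)$ holds. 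The same localisation makes the count $N$ in the conditional bound scale like $|\{e:S_e\neq\emptyset\}|$ rather than like $|E(\cH)|$, which is what your final paragraph needs but cannot get with global $D_v$'s.
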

\begin{proof}
    Let $\Delta \coloneqq \Delta_1(\cH)$, let $D \coloneqq \lfloor C\Delta / (2(q + b))\rfloor$, and for each $e \in \cH$, let $\cX_e$ be the set of partial $K_{q + b}$s rooted at $V(e)$.
    Since $\delta(G) \geq (1 - \eps)n$, every set of at most $q + b$ vertices of $G$ have at least $(1 - \eps(q + b))n \geq n / 2$ common neighbors in $G$.  Hence, 
    \begin{enumerate}[(a)]
        \item\label{num-cliques-lower-bound} $|\cX_e| \geq (n/2)^{b}/b! \geq (n/ (2b))^b$ for every $e \in E(\cH)$, and 
        \item\label{num-cliques-upper-bound} $|\{T \in \cX_e : V(T) \supseteq S\} \leq n^{b-|S|}$ for every $e \in E(\cH)$ and $S \subseteq V(G)\setminus V(e)$.
    \end{enumerate}    
  
    First, for each $e \in E(\cH)$, consider choosing $T_e\in \cX_e$ independently and uniformly at random, and choose $i_e \in [D]$ independently and uniformly at random.
    We will use the Lov\'asz Local Lemma to construct the desired probability distribution; to that end, we define the following ``bad events''.  
    
    Forbidding the first type of bad event will ensure $\Delta(T) \leq C\Delta_1(\cH)$.
    Let $J_1$ be the set of $(T_1, T_2, v, i)$, where $T_1 \in \cX_{e_1}$ and $T_{2} \in \cX_{e_2}$ for distinct $e_1, e_2 \in E(\cH)$, and $v \in (V(T_1) \setminus V(e_1)) \cap (V(T_{2}) \setminus V(e_2))$ and $i \in [D]$.  For every $(T_1, T_{2}, v, i) \in J_1$, let $\cE_{(T_1, T_{2}, v, i)}$ be the event that $T_{e_1} = T_{1}$ and $T_{e_2} = T_2$ and $i_{e_1} = i_{e_2} = i$.  

    Forbidding the second type of bad event will ensure the $T_e$'s are edge disjoint.
    Let $J_2$ be the set of $(T_1, T_2, f, i_1, i_2)$, where $T_1 \in \cX_{e_1}$ and $T_2 \in \cX_{e_2}$ for distinct $e_1, e_2 \in E(\cH)$, $f \in E(T_1) \cap E(T_2)$, and $i_1,i_2\in [D]$.  For every $(T_1, T_2, f, i_1,i_2) \in J_2$, let $\cE_{(T_1,T_2,f,i_1,i_2)}$ be the event that $T_{e_j} = T_j$ and $i_{e_j} = i_j$ for $j \in \{1,2\}$.
    Since $|\cX_e| \geq (n/(2b))^{b}$ for every $e \in E(\cH)$,
    \begin{equation*}
        \Prob{\cE_j} \leq (2b / n)^{2b}D^{-2} \text{ for every } j \in J_1 \cup J_2.
    \end{equation*}
    
    Next we construct the dependency graph for the Lov\'asz Local Lemma.  Let $\Gamma$ be the graph with vertex set $J_1 \cup J_2$ where $(T_1, T_2, f,i_1,i_2) \in J_2$ is adjacent to $(T'_1, T'_2, f,i'_1,i'_2) \in J_2$ or $(T'_1, T'_2, v, i) \in J_1$ if either $T_1, T'_1 \in \cX_e$ or $T_2, T'_2 \in \cX_e$ for some $e \in E(\cH)$, and $(T_1, T_2, v, i) \in J_1$ is adjacent to $(T'_1, T'_2, v, i') \in J_1$ if either $T_1, T'_1 \in \cX_e$ or $T_2, T'_2 \in \cX_e$ for some $e \in E(\cH)$.  
    By \ref{num-cliques-upper-bound}, every $j \in J_1 \cup J_2$ has at most 
    \begin{equation*}
        2n^b\binom{q + b}{2}\left(|E(\cH)| \cdot n^{b - 2} + 2\Delta n^{b - 1}\right)D^2 \leq 6D^2 (q + b)^2 n^{2b} / C
    \end{equation*}
    \COMMENT{We have $2$ choices for whether we'll have $T_1, T'_1 \in \cX_{e_1}$ or $T_2, T'_2 \in \cX_{e_2}$, assume wlog the former, then $n^b$ choices for $T'_1 \in \cX_e$, then $\binom{q + b}{2}$ choices for which edge of $T'_1$ plays the role of $f$, and then two types of choices for $T'_2 \in \cX_{e'_2}$ depending on whether $f$ is rooted at $V(e'_2)$ or not.  If it is not rooted at $V(e'_2)$, then we have $|E(\cH)|$ choices for $e'_2$ and at most $n^{b-2}$ choices for $T'_2 \in \cX_{e'_2}$ containing $f$ by \ref{num-cliques-upper-bound}.  If it is rooted at $V(e'_2)$, then there are two choices for the end of $f$, $\Delta_1(\cH)$ choices for $e'_2 \in \cH$ containing that vertex, and $n^{b-1}$ choices for $T'_2 \in \cX_{e'_2}$ containing the other end of $f$ by \ref{num-cliques-upper-bound}.}
    neighbors in $J_2$, where in the second inequality we used $\Delta \leq n / C$ and $|E(\cH)| \leq n\Delta$.  Similarly, by \ref{num-cliques-upper-bound}, every $j \in J_1 \cup J_2$ has at most 
    \begin{equation*}
        2n^b(q + b)\left(|E(\cH)|\cdot n^{b - 1} + \Delta n^b\right)D \leq 12D^2 (q + b)^2n^{2b} / C
    \end{equation*}
    neighbors in $J_1$, where in the second inequality we used $D \geq C\Delta / (3(q + b))$.  Hence, $\Delta(\Gamma) \leq 18D^2(q + b)^2n^{2b} / C$.

    Finally, we claim that the conditional distribution on $\bigcap_{j \in J_1 \cup J_2}\overline{\cE_j}$ is the desired distribution.  Indeed, given $\bigcap_{j \in J_2}\overline{\cE_j}$, we have that $T_{e}$ and $T_{e'}$ are edge-disjoint for distinct $e, e' \in E(\cH)$, and given $\bigcap_{j \in J_1}\overline{\cE_j}$, we have that $\Delta(T) \leq (D + \Delta)(q + b) \leq C\Delta$, as desired.  
    Finally, given $(S_e \subseteq V(G) : e \in E(\cH))$, let $\cE$ be the event that $S_e \subseteq V(T_e) \setminus V(e)$ for all $e \in E(\cH)$. We may assume $S_e \cap V(e) = \emptyset$ and $|S_e| \leq b$ for all $e \in E(\cH)$, or else $\Prob{\cE} = 0$, as desired.  Let $X \coloneqq \{e \in E(\cH) : S_e \neq \emptyset\}$.  By \ref{num-cliques-lower-bound} and \ref{num-cliques-upper-bound}, 
    \begin{equation*}
        \Prob{\cE} \leq \prod_{e\in X}\frac{n^{b - |S_e|}}{(n/(2b))^b} \leq \left(\frac{(2b)^b}{n}\right)^{\sum_{e\in X}|S_e|}.
    \end{equation*}
    Let $N_1$ be the number of $(T_1, T_2, v, i) \in J_1$ such that for some $e \in X$, either $T_1 \in \cX_e$ or $T_2 \in \cX_e$, let $N_2$ be the number of $(T_1, T_2, f, i_1, i_2) \in J_2$ such that for some $e \in X$, either $T_1 \in \cX_e$ or $T_2 \in \cX_e$, and let $N \coloneqq N_1 + N_2$.  Using the same argument used to bound $\Delta(\Gamma)$, we have
    \begin{equation*}
        N_1 \leq |X|6D^2 (q + b)^2 n^{2b}/C\qquad\text{and}\qquad N_2 \leq |X|12D^2 (q + b)^2n^{2b}/C,
    \end{equation*} 
    so $N \leq 18|X|D^2(q + b)^2n^{2b}/C$.  Applying Lemma~\ref{lem:conditionalLLL} with $(2b / n)^{2b}D^{-2}$ playing the role of $p$ implies that 
    \begin{equation*}
        \ProbCond{\cE}{\bigcap_{j \in J_1 \cup J_2}\overline{\cE_j}} \leq \left(\frac{(2b)^b}{n}\right)^{\sum_{e\in X}|S_e|}\exp\left(\frac{108|X|(q + b)^2(2b)^{2b}}{C}\right) \leq \left(\frac{(3b)^b}{n}\right)^{\sum_{e\in E(\cH)}|S_e|},
    \end{equation*}
    as desired.
\end{proof}

\subsection{Proof of the Spread Omni-Absorber Theorem}

Now we can combine Lemmas~\ref{lem:sparse-boosters-exist} and \ref{lemma:spread-clique-embedding} to prove Theorem~\ref{thm:OmniSpread}.

\begin{proof}[Proof of Theorem~\ref{thm:OmniSpread}]
    Let $C'$ be as in Theorem~\ref{thm:omni-absorber}, and let $X$ be a spanning subgraph of $G$ with $\Delta(X) \leq n / C$.
    By Theorem~\ref{thm:omni-absorber}, there exists a $C'$-refined omni-absorber $A' \subseteq G$ for $X$ with decomposition family $\cH_{A'}$ and decomposition function $\cQ_{A'}$ such that $\Delta(A') \leq C'\Delta$, where $\Delta \coloneqq \max\{\Delta(X), \sqrt{n}\log n\}$.  
    Note that $\Delta \leq n / C$ since $C \leq \sqrt{n}/\log n$.
    By Lemma~\ref{lem:sparse-boosters-exist}, there exists a rooted $K_q$-booster $(\tilde{B}, \cB^{\circ}, \cB^{\filledcirc}, R)$ with rooted density at most $2/(q-2)$.  Let $b \coloneqq |V(\tilde B)\setminus V(R)|$.

    Since $A'$ is $C'$-refined and $\Delta(A') \leq C'\Delta \leq C' n / C$, we have $\delta(G - (X\cup A')) \geq (1 - (C'+1)/C)n$ and $|\{H \in \cH_{A'} : V(H) \ni v\}| \leq C'd_{X\cup A'}(v) \leq C'(C' + 1)\Delta \leq C'(C' + 1)n / C$ for every $v \in V(G)$, so $\Delta_1(\cH_{A'}) \leq C'(C' + 1)\Delta \leq C'(C' + 1)n/C$.
    Hence, by Lemma~\ref{lemma:spread-clique-embedding} with $G - (X \cup A')$, $(C' + 1) / C$, $\cH_{A'}$, $C / (2C'(C' + 1))$ playing the roles of $G$, $\eps$, $\cH$, and $C$, respectively, for sufficiently large $C$, there exists a probability distribution on pairwise edge-disjoint subgraphs $(T_H : H \in \cH_{A'})$ of $G$ such that $T_H$ is a partial $K_{q + b}$ rooted at $V(H)$ satisfying $\Delta(T) \leq C\Delta/2$, where $T \coloneqq \bigcup_{H\in\cH_{A'}}T_H$ such that
    \begin{equation}\label{eqn:vertex-spreadness-boosters}
        \Prob{S_H \subseteq V(T_H)\setminus V(H)~\forall H \in \cH_{A'}} \leq \left(\frac{C^{1/(q-2)}}{n}\right)^{\sum_{H\in \cH_{A'}}|S_H|} \text{ for every } (S_H \subseteq V(G) : H \in \cH_{A'}). 
    \end{equation}
    
    For each $H \in \cH_{A'}$, choose a rooted booster $(B_H, \cB_H^{\circ}, \cB_H^{\filledcirc}, H)$ such that $B_H \subseteq T_H$ and $B_H \cong \tilde{B}$.  
    Let $A \coloneqq A' \cup \bigcup_{H\in \cH_{A'}}B_H$, let $\cH_A \coloneqq \bigcup_{H \in \cH_{A'}}(\cB_H^{\circ} \cup \cB_H^{\filledcirc})$, and let $\cQ_A(L) = \bigcup_{H\in\cQ_{A'}(L)}\cB^{\circ}_H \cup \bigcup_{H\in\cH_{A'}\setminus\cQ_{A'}(L)}\cB^{\filledcirc}_H$.  Note that $A$ is a $K_q$-omni-absorber with decomposition family $\cH_A$ and decomposition function $\cQ_A$.
    
    We claim that this probability distribution on $A$, $\cH_A$, and $\cQ_A$ is the desired distribution.  Since $\Delta(T) \leq C\Delta/2$ and $\Delta(A') \leq C'\Delta$, we have $\Delta(A) \leq \Delta(T) + \Delta(A') + \Delta(X) \leq C\Delta$, as required.  Let $\cS$ be a $K_q$-packing of $G$, and let $\cP$ be the set of $((\cS_H, *_H) : H \in \cH_{A'})$ where $(\cS_H : H \in \cH_{A'})$ is a partition of $\cS$ and $*_H \in \{\circ, \filledcirc\}$ for every $H \in \cH_{A'}$.  Note that $|\cP|$ is at most the number of injections from $\cS$ to $\cH_{A'}\times\{\circ,\filledcirc\}$; since $A'$ is $C'$-refined and $\Delta(A') \leq C'\Delta$, we have $2|\cH_{A'}| \leq 2C'(C' + 1)\Delta n \leq n^2 / C^{3/4}$, so
    \begin{equation*}
      |\cP| \leq \left(\frac{n^2}{C^{3/4}}\right)^{|\cS|}.
    \end{equation*}
    
    If there exists $L \subseteq X$ such that $\cS \subseteq \cQ_A(L)$, then there exists a partition $(\cS_H : H \in \cH_{A'})$ of $\cS$ such that $\cS_H \subseteq \cB^{\circ}_H$ for every $H \in \cQ_{A'}(L)$ and $\cS_H \subseteq \cB^{\filledcirc}_H$ for every $H \in \cH_{A'}\setminus\cQ_{A'}(L)$.  In particular, letting $*_H \coloneqq \circ$ for $H \in \cQ_{A'}(L)$ and $*_H\coloneqq\filledcirc$ for $H \in \cH_{A'}\setminus\cQ_{A'}(L)$, we have $\cS_H \subseteq \cB^{*_H}_H$ for every $H \in \cH_{A'}$.  Therefore, by a union bound,
    \begin{equation*}
     \Prob{\bigcup_{L\subseteq X}\{\cS \subseteq \cQ_A(L)\}}  \leq \sum_{((\cS_H, *_H) : H \in \cH_{A'})\in\cP}\Prob{\cS_H \subseteq \cB^{*_H}_H~\forall H \in \cH_{A'}}.
    \end{equation*}

     For every $*\in\{\circ,\filledcirc\}$, and $\cS' \subseteq \cB^{*}$, since $(\tilde B, \cB^{\circ}, \cB^{\filledcirc}, R)$ has rooted density at most $2 / (q - 2)$, we have $|\bigcup_{S\in \cS'}V(S)\setminus V(H)| \geq (q - 2)|\cS'| / 2.$  Therefore, for every $((\cS_H, *_H) : H \in \cH_{A'})\in\cP$, by \eqref{eqn:vertex-spreadness-boosters}, we have 
    \begin{equation*}
      \Prob{\cS_H \subseteq \cB^{*_H}_H~\forall H \in \cH_{A'}} \leq \left(\frac{\sqrt C}{n^{{(q - 2})/{2}}}\right)^{|\cS|}.
    \end{equation*}
    Combining the inequalities above, we have    
    \begin{equation*}
      \Prob{\bigcup_{L\subseteq X}\{\cS \subseteq \cQ_A(L)\}} \le \left(\frac{1}{C^{1/4}n^{(q - 6)/2}}\right)^{|\cS|},
    \end{equation*}
      as desired.
\end{proof}

\section{Construction of Spread Boosters}\label{sec:spread-boosters}

In this section, we construct our spread boosters; namely, we prove Lemma~\ref{lem:sparse-boosters-exist} which says there exists a rooted $K_q$-booster with rooted density at most $2/(q-2)$. First, we prove the following lemma which constructs such a booster except for one special clique which is not counted in the density calculation. 

\begin{lem}\label{lem:spread-booster-w-special-clique}
    For every integer $q > 2$, there exists a $K_q$-booster $B$ with disjoint decompositions $\cB_1$ and $\cB_2$, $S_i \in \cB_i$ for $i \in \{1,2\}$ with $|V(S_1) \cap V(S_2)| = q - 1$ such that
    \begin{itemize}
        \item $m(\cB_1\setminus\{S_1\}, V(S_1)) \leq 2 / (q-2)$ and
        \item $m(\cB_2\setminus\{S_2\}, V(S_1) \cup V(S_2)) \leq 2/(q - 2)$.
    \end{itemize}
\end{lem}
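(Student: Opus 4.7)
The plan is to construct $B$ explicitly together with its two $K_q$-decompositions $\cB_1$ and $\cB_2$. Fix $V(S_1) = \{v_1, \ldots, v_q\}$ and $V(S_2) = \{v_2, \ldots, v_q, w\}$ so that $|V(S_1) \cap V(S_2)| = q - 1$, and note that the symmetric difference $E(S_1) \triangle E(S_2)$ is the union of the star at $v_1$ (inside $S_1$) and the star at $w$ (inside $S_2$), both supported on the shared vertices $v_2, \ldots, v_q$. The rest of $B$ will consist of carefully designed ``fixer'' cliques supported on a set of auxiliary vertices $U$, used to complete each decomposition while keeping $\cB_1$ and $\cB_2$ disjoint outside of $\{S_1, S_2\}$.

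A key structural observation makes the density analysis much more tractable. Every clique $H \in \cB_1 \setminus \{S_1\}$ contains at most one vertex of $V(S_1)$, because any two vertices of $V(S_1)$ span an edge already covered by $S_1$ in $\cB_1$; hence $|V(H) \setminus V(S_1)| \geq q - 1$, so the rooted density contributed by a single clique is at most $1/(q-1)$, well within the target $2/(q-2)$. Similarly, every $H \in \cB_2 \setminus \{S_2\}$ uses at most one vertex of $V(S_2)$ and at most one vertex of $V(S_1) \setminus V(S_2) = \{v_1\}$, so $|V(H) \setminus (V(S_1) \cup V(S_2))| \geq q - 2$, giving per-clique density at most $1/(q-2)$. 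Thus the real content of the lemma is to verify the density bound for arbitrary \emph{subcollections}, where multiple cliques may share auxiliary vertices.

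To that end, I would organize the auxiliary vertices into ``blocks'' of size roughly $(q-2)/2$, with each fixer clique ``owning'' one block while only lightly overlapping with neighboring blocks, so that any $k$ fixer cliques collectively span at least $\lceil k/2 \rceil$ blocks and hence at least $k(q-2)/2$ auxiliary vertices. This precisely yields the subcollection density bound $2/(q-2)$. The base case $q = 3$ is witnessed by the octahedron $K_{2,2,2}$ with its two natural triangle decompositions (indexed by the parity of a $\{0,1\}^3$ labelling of the parts), which gives rooted density exactly $1 = 2/(q-2)$; for larger $q$ one can either give a direct construction using a transversal-design-style gadget or proceed by an inductive gluing that adds a block of $(q-2)/2$ new vertices at a time while preserving both the two-decompositions structure and the block-overlap invariant. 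The main obstacle is threading all three requirements simultaneously: that $\cB_1$ and $\cB_2$ each decompose $E(B)$ exactly, that they are disjoint outside of $\{S_1, S_2\}$ with the specified $|V(S_1) \cap V(S_2)| = q-1$, and that the block structure is tight enough to enforce the density bound on every subcollection. Once the block layout and the combinatorics of how fixer cliques select blocks are fixed, the density verification reduces to a routine counting argument combining the per-clique lower bound above with the block-spanning lower bound.
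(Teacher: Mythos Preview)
Your proposal is not a proof: it identifies the right target (the subcollection density bound) and makes correct per-clique observations, but the actual construction is never given. The ``blocks of size roughly $(q-2)/2$'' idea and the ``transversal-design-style gadget or inductive gluing'' are placeholders, not constructions; without specifying exactly which cliques appear in $\cB_1$ and $\cB_2$ and how they overlap, there is nothing to verify. In particular, the requirement that $\cB_1$ and $\cB_2$ be \emph{disjoint $K_q$-decompositions of the same graph} is delicate and does not follow from any of the density heuristics you state, so this is a genuine gap.

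The paper's construction is short and explicit and you may find it instructive. Take $V(B)=\{v_1,v_2\}\cup[q-1]^2$, make $v_1,v_2$ complete to $[q-1]^2$, and put an edge between $(i,j)$ and $(i',j')$ iff $i=i'$ or $j=j'$ (so $B$ is the Cartesian product $K_{q-1}\,\square\,K_{q-1}$ with two dominating vertices; for $q=3$ this is indeed your octahedron). Each row and each column induces a $K_{q-1}$, and attaching $v_1$ or $v_2$ turns it into a $K_q$. Setting $\cB_1=\{\text{rows}+v_1\}\cup\{\text{columns}+v_2\}$ and $\cB_2=\{\text{rows}+v_2\}\cup\{\text{columns}+v_1\}$ gives two disjoint $K_q$-decompositions; taking $S_1$ and $S_2$ to be row $1$ with $v_1$ and $v_2$ respectively gives $|V(S_1)\cap V(S_2)|=q-1$. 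The density bound is then a two-line count: any $\cH\subseteq\cB_1\setminus\{S_1\}$ splits into $\cH^{\mathrm h}$ (rows) and $\cH^{\mathrm v}$ (columns), which contribute \emph{disjoint} sets of $(q-1)|\cH^{\mathrm h}|$ and $(q-2)|\cH^{\mathrm v}|$ vertices outside $V(S_1)\cup V(S_2)$, so
\[
d(\cH,V(S_1))\le \frac{|\cH^{\mathrm h}|+|\cH^{\mathrm v}|}{(q-2)\max\{|\cH^{\mathrm h}|,|\cH^{\mathrm v}|\}}\le \frac{2}{q-2}.
\]
This is the ``two-type'' structure your block idea was groping toward, but realized cleanly via rows and columns of a grid rather than an inductive build.
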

\begin{proof}
    We define the graph $B$ with $V(B) \coloneqq \{v_1, v_2\} \cup [q-1]\times[q-1]$, where $v_1$ and $v_2$ are adjacent to every vertex in $[q - 1] \times [q - 1]$ and $(i,j) \in [q - 1]\times[q - 1]$ is adjacent to $(i',j') \in [q - 1]\times[q - 1]$ if either $i = i'$ (and $j \neq j'$) or $j = j'$ (and $i \neq i'$).
    Note that $B$ is the Cartesian product of $K_{q - 1}$ with $K_{q - 1}$ (or equivalently, the line graph of $K_{q-1,q-1})$ with two dominating vertices added.  The Cartesian product of $K_{q-1}$ with $K_{q - 1}$ has a $K_{q - 1}$-decomposition in which each vertex is contained in exactly two cliques (one ``horizontal'' and one ``vertical'').  We will extend this decomposition to a $K_q$-decomposition of $B$ in two different ways, as follows.  
    For each $i \in [q - 1]$ and $x \in \{1,2\}$, let $Q^{\hoclique}_{i,x} \coloneqq \{v_x\} \cup \{(i, j) : j \in [q - 1]\}]$, and for each $j \in [q - 1]$ and $x \in \{1,2\}$, let $Q^{\vertclique}_{j,x} \coloneqq \{v_x\} \cup \{(i,j) : i \in [q-1]\}$.  Let $\cB_1 \coloneqq \{B[Q^{\hoclique}_{i,1}] : i \in [q - 1]\} \cup \{B[Q^{\vertclique}_{j,2}] : j \in [q - 1]\}$, and let $\cB_2 \coloneqq \{B[Q^{\hoclique}_{i,2}] : i \in [q - 1]\} \cup \{B[Q^{\vertclique}_{j,1}] : j \in [q - 1]\}$.

    Now we have two disjoint $K_q$-decompositions $\cB_1$ and $\cB_2$ of $B$, as desired.  Let  
    $S_1 \coloneqq B[Q^{\hoclique}_{1,1}]$, let $S_2 \coloneqq B[Q^{\hoclique}_{1,2}]$, and note that $S_i \in \cB_i$ for $i \in \{1,2\}$ with $|V(S_1) \cap V(S_2)| = q - 1$, as desired.  It remains to check the rooted densities.  

    To that end, let $\cH \subseteq \cB_1\setminus\{S_1\}$.  Let $\cH^{\hoclique} \coloneqq \cH \cap \{B[Q^{\hoclique}_{i,1}] : i \in [q - 1]\}$, and let $\cH^{\vertclique} \coloneqq \cH\setminus \cH^{\hoclique}$.  Note that
    \begin{equation*}
        \left|\bigcup_{H\in\cH^{\hoclique}}V(H)\setminus (V(S_1) \cup V(S_2))\right| = (q - 1)|\cH^{\hoclique}| \quad\text{and}\quad \left|\bigcup_{H\in\cH^{\vertclique}}V(H)\setminus (V(S_1) \cup V(S_2))\right| = (q - 2)|\cH^{\vertclique}|,
    \end{equation*}
    so
    \begin{equation*}
        d(\cH, V(S_1)) = \frac{|\cH|}{\left|\bigcup_{H\in\cH}V(H)\setminus V(S_1)\right|} \leq \frac{|\cH|}{(q-2)\max\{|\cH^{\hoclique}|, |\cH^{\vertclique}|\}} \leq \frac{2}{q - 2},
    \end{equation*}
    as desired.
    By a nearly identical argument, we can show $m(\cB_2 \setminus \{S_2\}, V(S_1) \cup V(S_2)) \leq 2 / (q - 2)$, so we omit the details.\COMMENT{
    To that end, let $\cH \subseteq \cB_2\setminus\{S_2\}$.  Let $\cH^{\hoclique} \coloneqq \cH \cap \{B[Q^{\hoclique}_{i,2}] : i \in [q - 1]\}$, and let $\cH^{\vertclique} \coloneqq \cH\setminus \cH^{\hoclique}$.  Note that
    \begin{equation*}
        |\bigcup_{H\in\cH^{\hoclique}}V(H)\setminus (V(S_1) \cup V(S_2))| = (q - 1)|\cH^{\hoclique}| \quad\text{and}\quad |\bigcup_{H\in\cH^{\vertclique}}V(H)\setminus (V(S_1) \cup V(S_2))| = (q - 2)|\cH^{\vertclique}|,
    \end{equation*}
    so
    \begin{equation*}
        d(\cH, V(S_1)\cup V(S_2)) = \frac{|\cH|}{|\bigcup_{H\in\cH}V(H)\setminus (V(S_1) \cup V(S_2))|} \leq \frac{|\cH|}{(q-2)\max\{|\cH^{\hoclique}|, |\cH^{\vertclique}|\}} \leq \frac{2}{q - 2},
    \end{equation*}
    as desired.}
\end{proof}

Now, we iteratively use the construction from Lemma~\ref{lem:spread-booster-w-special-clique} to prove Lemma~\ref{lem:sparse-boosters-exist}. Namely, by iteratively layering that construction, we are able to make the special clique more and more disjoint from the rooted clique; once the two cliques are vertex-disjoint, we can then add the special clique back to the rooted density calculation.

\begin{proof}[Proof of Lemma~\ref{lem:sparse-boosters-exist}]
    Let $B$ be a $K_q$-booster with disjoint $K_q$-decompositions $\cB_1$ and $\cB_2$, $S_i \in \cB_i$ for $i \in \{1,2\}$ with
    \begin{enumerate}[label=(\ref*{lem:sparse-boosters-exist}.\arabic*), leftmargin=*]
        \item\label{inductive-booster1} $m(\cB_1\setminus\{S_1\}, V(S_1)) \leq 2 / (q-2)$ and
        \item\label{inductive-booster2} $m(\cB_2\setminus\{S_2\}, V(S_1) \cup V(S_2)) \leq 2/(q - 2)$
    \end{enumerate}
    such that $|V(S_1) \cap V(S_2)|$ is minimized.    
    Such a booster exists by Lemma~\ref{lem:spread-booster-w-special-clique}.  
    
    If $|V(S_1) \cap V(S_2)| = 0$, then we claim that $(B - E(S_1), \cB_2, \cB_1\setminus\{S_1\}, S_1)$ is a rooted $K_q$-booster with rooted density at most $2/(q - 2)$.  Indeed, by \ref{inductive-booster1}, it suffices to show that $m(\cB_2, V(S_1)) \leq 2 / (q - 2)$.  To that end, let $\cH \subseteq \cB_2$.  If $S_2 \notin \cH$, then by \ref{inductive-booster2}, $d(\cH, V(S_1)) \leq d(\cH, V(S_1) \cup V(S_2)) \leq 2/(q - 2)$, as required.  Otherwise, if $S_2 \in \cH$, then by \ref{inductive-booster2},
    \begin{equation*}
         \frac{|\cH| - 1}{|\bigcup_{H\in \cH\setminus\{S_2\}}V(H)\setminus (V(S_1) \cup V(S_2))|} = d(\cH\setminus\{S_2\}, V(S_1) \cup V(S_2)) \leq \frac{2}{q - 2}.
    \end{equation*}
    Moreover,
    \begin{equation*}
        \left|\bigcup_{H\in \cH\setminus\{S_2\}}V(H)\setminus (V(S_1) \cup V(S_2))\right| = \left|\bigcup_{H\in \cH}V(H)\setminus V(S_1)\right| - q,
    \end{equation*}
    and since $|\bigcup_{H\in \cH}V(H)\setminus V(S_1)| \leq q|\cH|$, we have
    \begin{equation*}
        d(\cH, V(S_1)) = \frac{|\cH|}{|\bigcup_{H\in \cH}V(H)\setminus V(S_1)|} \leq \frac{|\cH| - 1}{|\bigcup_{H\in \cH}V(H)\setminus V(S_1)| - q}.
    \end{equation*}
    Combining the inequalities above, we have $d(\cH, V(S_1)) \leq 2 / (q - 2)$, so $m(\cB_2, V(S_1)) \leq 2/(q - 2)$, as claimed.
    
    Therefore, we may assume $V(S_1) \cap V(S_2) \neq \emptyset$.
    By Lemma~\ref{lem:spread-booster-w-special-clique} again, there exists a $K_q$-booster $B'$ with disjoint decompositions $\cB'_1$ and $\cB'_2$, $S'_i \in \cB'_i$ for $i \in \{1,2\}$ with $|V(S'_1) \cap V(S'_2)| = q - 1$ such that 
    \begin{enumerate}[label=(\ref*{lem:sparse-boosters-exist}.\arabic*'), leftmargin=*]
        \item\label{inductive-booster1'} $m(\cB'_1\setminus\{S'_1\}, V(S'_1)) \leq 2 / (q-2)$ and  
        \item\label{inductive-booster2'} $m(\cB'_2\setminus\{S'_2\}, V(S'_1) \cup V(S'_2)) \leq 2/(q - 2)$.
    \end{enumerate}
    By possibly relabelling the vertices of $B'$, we may assume that $S'_1 = S_2$, $V(B') \cap V(B) = V(S_2)$, and $E(B) \cap E(B') = E(S_2)$.  Moreover, since $|V(S'_1) \setminus V(S'_2)| = 1$ and $V(S_1) \cap V(S_2) \neq \emptyset$, we may also assume that $V(S'_1) \setminus V(S'_2) \subseteq V(S_1) \cap V(S_2)$.
    Let $B'' \coloneqq B \cup B'$, $\cB''_1 \coloneqq \cB_1 \cup (\cB'_1\setminus\{S'_1\})$, and $\cB''_2 \coloneqq (\cB_2\setminus \{S_2\}) \cup \cB'_2$,
    and note that $\cB''_1$ and $\cB''_2$ are disjoint $K_q$-decompositions of $B''$.  Moreover, $S_1 \in \cB''_1$ and $S'_2 \in \cB''_2$, and since $V(S'_1) \setminus V(S'_2) \subseteq V(S_1) \cap V(S_2)$, we have $|V(S_1) \cap V(S'_2)| < |V(S_1) \cap V(S_2)|$.  We claim that \ref{inductive-booster1} and \ref{inductive-booster2} hold with $\cB''_1$, $\cB''_2$, $S_1$, and $S'_2$ playing the roles of $\cB_1$, $\cB_2$, $S_1$, and $S_2$, contradicting the choice of $B$ with $|V(S_1) \cap V(S_2)|$ minimum.

    To that end, let $\cH \subseteq \cB''_1\setminus\{S_1\}$. Let $\cH_1 \coloneqq \cH \cap \cB_1$, and let $\cH_2 \coloneqq \cH\setminus \cH_1$.  By \ref{inductive-booster1},
    \begin{equation*}
        \frac{|\cH_1|}{|\bigcup_{H \in \cH_1}V(H)\setminus V(S_1)|} = d(\cH_1, V(S_1)) \leq \frac{2}{q-2},
    \end{equation*}
    and by \ref{inductive-booster1'}
    \begin{equation*}
        \frac{|\cH_2|}{|\bigcup_{H \in \cH_2}V(H)\setminus V(S'_1)|} = d(\cH_2, V(S'_1)) \leq \frac{2}{q-2}.
    \end{equation*}
    Therefore
    \begin{equation*}
        |\cH_1| + |\cH_2| \leq \frac{2}{q - 2}\left(\left|\bigcup_{H \in \cH_1}V(H)\setminus V(S_1)\right| + \left|\bigcup_{H \in \cH_2}V(H)\setminus V(S'_1)\right|\right),
    \end{equation*}
    and since $V(B) \cap V(B') = V(S_2) = V(S'_1)$, we have
    \begin{equation*}
        \left|\bigcup_{H \in \cH}V(H)\setminus V(S_1)\right| = \left|\bigcup_{H \in \cH_1}V(H)\setminus V(S_1)\right| + \left|\bigcup_{H \in \cH_2}V(H)\setminus V(S'_1)\right|.
    \end{equation*}
    Combining the inequalities above, we have
    \begin{equation*}
        d(\cH, V(S_1)) = \frac{|\cH_1| + |\cH_2|}{|\bigcup_{H \in \cH}V(H)\setminus V(S_1)|} \leq \frac{2}{q-2},
    \end{equation*}
    so $m(\cB''_1, V(S_1)) \leq 2 / (q - 2)$, as required.
    By a nearly identical argument, we can show $m(\cB''_2 \setminus \{S'_2\}, V(S_1) \cup V(S'_2)) \leq 2 / (q - 2)$, so we omit the details.\COMMENT{
    To that end, let $\cH \subseteq \cB''_2\setminus\{S'_2\}$. Let $\cH_1 \coloneqq \cH \cap \cB_2$, and let $\cH_2 \coloneqq \cH\setminus \cH_1$.  By \ref{inductive-booster2},
    \begin{equation*}
        \frac{|\cH_1|}{|\bigcup_{H \in \cH_1}V(H)\setminus (V(S_1) \cup V(S_2)|} = d(\cH_1, V(S_1) \cup V(S_2)) \leq \frac{2}{q-2},
    \end{equation*}
    and by \ref{inductive-booster2'}
    \begin{equation*}
        \frac{|\cH_2|}{|\bigcup_{H \in \cH_2}V(H)\setminus (V(S'_1) \cup V(S'_2)|} = d(\cH_2, V(S'_1)) \leq \frac{2}{q-2}.
    \end{equation*}
    Therefore
    \begin{equation*}
        |\cH_1| + |\cH_2| \leq \frac{2}{q - 2}\left(|\bigcup_{H \in \cH_1}V(H)\setminus (V(S_1) \cup V(S_2)| + |\bigcup_{H \in \cH_2}V(H)\setminus (V(S'_1) \cup V(S'_2)|\right),
    \end{equation*}
    and since $V(B) \cap V(B') = V(S_2) = V(S'_1)$, we have
    \begin{equation*}
        |\bigcup_{H \in \cH}V(H)\setminus (V(S_1) \cup V(S'_2)| = |\bigcup_{H \in \cH_1}V(H)\setminus (V(S_1) \cup V(S_2)| + |\bigcup_{H \in \cH_2}V(H)\setminus (V(S'_1) \cup V(S'_2)|.
    \end{equation*}
    Combining the inequalities above, we have
    \begin{equation*}
        d(\cH, V(S_1) \cup V(S'_2)) = \frac{|\cH_1| + |\cH_2|}{|\bigcup_{H \in \cH}V(H)\setminus (V(S_1) \cup V(S'_2)|} \leq \frac{2}{q-2},
    \end{equation*}
    so $m(\cB''_1, V(S_1)\cup V(S'_2)) \leq 2 / (q - 2)$, as required.
    }
\end{proof}

\section{Spread Nibble with Reserves}\label{s:SpreadNibble}

The next key ingredient for our proof of Theorem~\ref{thm:ExistenceSpread} is a spread version of nibble with reserves.  As this is not the bottleneck, we can get away with a simple random sparsification argument.

\begin{thm}\label{thm:spread-nibble}
  For all $r, \gamma, \beta > 0$, the following holds for sufficiently large $D$ and sufficiently small $\alpha > 0$.
  Let $G$ be an $r$-uniform (multi)-hypergraph such that $G$ is the edge-disjoint union of $G_1$ and $G_2$ where $G_2=(A,B)$ is a bipartite hypergraph and $G_1$ is a hypergraph with $V(G_1)\cap V(G_2) = A$.
  If $G$ has codegrees at most $D^{1-\beta}$, every vertex of $B$ has degree at most $D$ in $G_2$, every vertex of $A$ has degree at least $D^{1-\alpha}$ in $G_2$, every vertex of $G_1$ has degree at most $D$ in $G_1$, and every vertex of $A$ has degree at least $D\left(1- D^{-\beta}\right)$ in $G_1$,
  then there exists a probability distribution on $A$-perfect matchings $M$ of $G$ such that
  \begin{equation*}
    \Prob{S \subseteq M} \leq \left(\frac{1}{D^{1 - \gamma}}\right)^{|S|} \text{for every $S \subseteq E(G)$}.
  \end{equation*}
\end{thm}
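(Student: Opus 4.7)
The strategy is a simple random sparsification argument. Form $G' \subseteq G$ by retaining each edge of $G$ independently with probability $\pi := D^{-(1-\gamma/2)}$ (we may assume $\gamma < 1$, else the bound is trivial). The key point is that any matching $M$ chosen deterministically from $G'$ satisfies $\Prob{S \subseteq M} \leq \Prob{S \subseteq E(G')} = \pi^{|S|}$ for every $S \subseteq E(G)$, so the spreadness follows automatically from the sparsification.

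Set $D' := \pi D = D^{\gamma/2}$, fix $\beta' \in (0, 2\beta/\gamma)$, let $\alpha_{\beta'} > 0$ be the value furnished by Theorem~\ref{thm:NibbleReserves} applied with $\beta'$, and take $\alpha$ small enough that $\alpha' := 2\alpha/\gamma \leq \alpha_{\beta'}$. The first step is to verify, via Chernoff bounds and a union bound over the $O(n^2)$ pairs of vertices of $G$, that with probability at least $1/2$ the random hypergraph $G'$ satisfies the hypotheses of Theorem~\ref{thm:NibbleReserves} with parameters $D', \beta', \alpha'$. The calculations are routine: maximum degrees within $G_1'$ and on the $B$-side of $G_2'$ concentrate around $\pi D = D'$; minimum degrees on the $A$-side of $G_1'$ are at least $D'(1 - D'^{-\beta'})$ using the slack $\gamma\beta' < 2\beta$; minimum degrees on the $A$-side of $G_2'$ are at least $D'^{1-\alpha'}$ by the choice of $\alpha$; and codegrees in $G'$ are at most $D'^{1-\beta'}$, obtained from Chernoff concentration around $\pi D^{1-\beta}$ when that mean is large and from the direct tail bound for Binomials of small mean otherwise.

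Having established that this good event $\cE$ holds with probability at least $1/2$, I would define the random matching $M$ by rejection sampling: resample $G'$ until $\cE$ holds, and then output the $A$-perfect matching $M(G')$ produced by Theorem~\ref{thm:NibbleReserves} applied to $G'$ under any fixed deterministic tie-breaking rule. This $M$ is an $A$-perfect matching of $G$ since $M \subseteq E(G') \subseteq E(G)$, and for any $S \subseteq E(G)$ the inclusion $M \subseteq E(G')$ together with $\Prob{\cE}\geq 1/2$ gives
\[
\Prob{S \subseteq M} \leq \Prob{S \subseteq E(G') \mid \cE} \leq \frac{\Prob{S \subseteq E(G')}}{\Prob{\cE}} \leq 2\pi^{|S|} = 2 D^{-(1-\gamma/2)|S|} \leq D^{-(1-\gamma)|S|}
\]
for $D$ large enough that $D^{\gamma/2} \geq 2$, yielding the required bound.

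The main obstacle is the codegree condition: the deterministic bound $D^{1-\beta}$ on codegrees of $G'$ exceeds the target $D'^{1-\beta'}$ when $\gamma$ is small, so one must invoke Chernoff concentration to shrink codegrees by the sparsification factor. Reliable concentration requires $\pi D^{1-\beta}$ to be at least polylogarithmic in the vertex count $n$ of $G$, which effectively demands that $D$ is polynomially large in $n$ — a mild assumption that holds automatically in the intended application of this theorem to the proof of Theorem~\ref{thm:spread-decomposition}.
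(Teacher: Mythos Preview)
Your sparsification strategy is exactly the one the paper uses, and your calculation that conditioning on the good event costs at most a constant factor is the right idea. The gap is in establishing that the good event has probability bounded away from zero. The theorem as stated fixes $D$ sufficiently large in terms of $r,\gamma,\beta$ only and then quantifies over all hypergraphs $G$ satisfying the degree/codegree hypotheses; in particular $n=|V(G)|$ may be arbitrarily large relative to $D$. Your union bound over all vertices and all $O(n^2)$ pairs therefore cannot succeed in general: the failure probabilities for individual bad events are of the form $\exp(-D^{c})$ or $\exp(-\log^2 D)$, and no choice of $D$ independent of $n$ makes $n^2\exp(-D^c)<1/2$ for every admissible $G$. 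You flag this for the codegree events, but the same issue already arises for the degree events, and in neither case is it a ``mild assumption'' --- it is a hypothesis the theorem does not have.

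The paper closes this gap by replacing the union bound with the Lov\'asz Local Lemma: the bad events (one per vertex and one per pair of vertices) have a dependency graph of maximum degree $O(r^2 D)$, independent of $n$, so the LLL condition $4p\Delta(\Gamma)\le 1$ holds once $D$ is large in terms of $r,\gamma,\beta$. To recover spreadness after conditioning on the good event, the paper invokes the conditional LLL (Lemma~\ref{lem:conditionalLLL}): for the event $\cE=\{S\subseteq E(G')\}$, the number $N$ of bad events sharing a variable with $\cE$ is $O(r^2 D|S|)$, so the correction factor $\exp(6pN)$ is at most $2^{|S|}$, giving $\ProbCond{S\subseteq E(G')}{\text{good}}\le (2D^{\gamma/2-1})^{|S|}\le D^{-(1-\gamma)|S|}$. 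This is precisely your rejection-sampling inequality, but with the global factor $1/\Prob{\cE}$ replaced by a local factor that scales with $|S|$ rather than with $n$.
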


\begin{proof}
  We may assume without loss of generality that $\gamma < \beta$, or else we replace $\gamma$ with $\beta$.  Moreover, we may assume $\gamma$ is sufficiently small with respect to $\beta$, $\alpha$ is sufficiently small with respect to $\gamma$, and $1 / D$ is sufficiently small with respect to $\alpha$.  First, consider the random spanning hypergraphs $G'_1 \subseteq G_1$, and $G'_2 \subseteq G_2$ obtained by including each $e \in E(G_i)$ in $G'_i$ independently with probability $D^{\gamma/2 - 1}$ for each $i \in \{1,2\}$, and let $G' \subseteq G'_1 \cup G'_2$.
  We define the following events.
  \begin{itemize}
  \item For every $\{u,v\} \in \binom{V(G)}{2}$, let $\cE_{\{u,v\}}$ be the event that $d_{G'}(\{u,v\}) > \log^2 D$.
  \item For every $b \in B$, let $\cE_b$ be the event that $d_{G'_2}(b) > D^{\gamma/2} + D^{\gamma/3}$.
  \item For every $v \in V(G_1)$, let $\cE_{(v, 0)}$ be the event that $d_{G'_1}(v) > D^{\gamma/2} + D^{\gamma/3}$.
  \item For every $a \in A$, let $\cE_{(a,1)}$ be the event that $d_{G'_1}(a) < D^{\gamma/2}(1 - D^{-\gamma / 6})$.
  \item For every $a \in A$, let $\cE_{(a,2)}$ be the event that $d_{G'_2}(a) < D^{\gamma/2 - \alpha} - D^{\gamma/3}$.
  \end{itemize}
  Let $\Gamma$ be the graph with vertex set $\binom{V(G)}{2} \cup B \cup (V(G_1)\times\{0\}) \cup (A\times[2])$ where $\{u,v\} \in \binom{V(G)}{2}$ is adjacent to $\{u',v'\} \in \binom{V(G)}{2}$ if $G$ contains an edge containing $\{u,v, u', v'\}$ and adjacent to $b \in B$ if $G$ contains an edge containing $\{u,v, b\}$.  Other adjacencies in $\Gamma$ are defined similarly so that $S_j \cap S_{j'} = \emptyset$ for distinct non-adjacent $j,j'\in V(\Gamma)$, where $S_j$ and $S_{j'}$ are the sets of trials determining whether an edge is included in $G'_1$ or $G'_2$ upon which $\cE_j$ and $\cE_{j'}$ depend, respectively.  Note that $\Delta(\Gamma) \leq 5r^2D$.
  By a standard application of the Chernoff Bounds (see e.g.~\cite[Theorem A.1.12]{AS16} and \cite[Corollary 2.3]{JLR00}), $\Prob{\cE_j} \leq \exp(\log^2(D))$ for every $j \in V(\Gamma)$\COMMENT{
  \begin{itemize}
      \item $\Expect{d_{G'}(\{u,v\})}  \leq (D^{1 - \beta})D^{\gamma/2 - 1} = D^{\gamma/2 - \beta}$, so \begin{equation*}
          \Prob{d_{G'}(\{u,v\}) > (\log^2D \cdot D^{\beta-\gamma/2})D^{\gamma/2 - \beta}} \leq (\log^2D \cdot D^{\beta-\gamma/2} / e)^{-\log^2 D} \leq \exp(-\log^2 D);
      \end{equation*}
      \item $\Expect{d_{G'_2}(b)} \leq D^{\gamma/2}$, so
      \begin{equation*}
          \Prob{d_{G'_2}(b) > D^{\gamma/2} + D^{\gamma/2}\cdot D^{-\gamma/6}} \leq \exp(-D^{-2\gamma/6}\cdot D^{\gamma/2}/3) \leq \exp(-\log^2 D);
      \end{equation*}
      \item $\Expect{d_{G'_1}(v)} \leq D^{\gamma/2}$, so
      \begin{equation*}
          \Prob{d_{G'_1}(v) > D^{\gamma/2} + D^{\gamma/2}\cdot D^{-\gamma/6}} \leq \exp(-D^{-2\gamma/6}\cdot D^{\gamma/2}/3) \leq \exp(-\log^2 D);
      \end{equation*}
      \item $\Expect{d_{G'_1}(a)} \geq D^{\gamma/2}(1 - D^{-\beta})$, and $D^{\gamma/2}(1 - D^{-\beta})(1 - D^{-\gamma/6}/2) \geq D^{\gamma/2}(1 - D^{-\gamma/6})$ so 
      \begin{equation*}
          \Prob{d_{G'_1}(a) < D^{\gamma/2}(1 - D^{-\beta})(1 - D^{-\gamma/6}/2)} \leq \exp(-D^{-2\gamma/6}\cdot D^{\gamma/2}/12) \leq \exp(-\log^2 D);
      \end{equation*}
      \item $\Expect{d_{G'_2}(a)} \leq D^{\gamma/2 - \alpha}$, so
      \begin{equation*}
          \Prob{d_{G'_2}(a) < D^{\gamma/2 - \alpha} - D^{\gamma/2 - \alpha}\cdot D^{-\gamma/6+\alpha}} \leq \exp(-D^{2(-\gamma/6+\alpha)}\cdot D^{\gamma/2}/3) \leq \exp(-\log^2 D);
      \end{equation*}
  \end{itemize}
  }.

  For every outcome $G'$ for which $\bigcap_{j \in V(\Gamma)}\overline{\cE_j}$ holds, by Theorem~\ref{thm:NibbleReserves} applied with $G'$, $G'_1$, $G'_2$, $D^{\gamma / 2} + D^{\gamma/3}$, $1/4$, and $3\alpha / \gamma$ playing the roles of $G$, $G_1$, $G_2$, $D$, $\beta$, and $\alpha$, respectively\COMMENT{
  \begin{itemize}
  \item $(D^{\gamma/2} + D^{\gamma/3})^{1 - 1/4} \geq \log^2 D$, so the codegrees are small enough;
  \item $(D^{\gamma/2} + D^{\gamma/3})^{1 - 3\alpha/\gamma} \leq D^{\gamma/2 - \alpha} - D^{\gamma/3}$, so degrees of vertices in $A$ are large enough in $\cG'_2$;
  \item Since
  \begin{align*}
      D^{\gamma/2} + D^{\gamma/3} - (D^{\gamma/2} + D^{\gamma/3})^{1-1/4} &\leq D^{\gamma/2} - D^{\gamma/3} &\text{iff}\\
      2D^{\gamma/3} &\leq (D^{\gamma/2} + D^{\gamma/3})^{3/4},
  \end{align*}
  we have $(D^{\gamma/2} + D^{\gamma/3})(1 - (D^{\gamma/2} + D^{\gamma/3})^{-1/4}) \leq D^{\gamma/2}(1 - D^{-\gamma/6})$, so degrees of vertices in $A$ are large enough in $\cG'_1$;  
  \end{itemize}
  }, $G'$ has an $A$-perfect matching $M$.
  We claim that the conditional distribution on $\bigcap_{j \in V(\Gamma)}\overline{\cE_j}$ yields the desired distribution.
  To that end, let $S \subseteq E(G)$, and let $\cE$ be the event that $S \subseteq E(G')$.  Note that there are at most $5r^2 D|S|$ choices of $j \in V(\Gamma)$ for which $\cE_j$ depends on whether $S \subseteq E(G')$.  Hence, by Lemma~\ref{lem:conditionalLLL},      
  \begin{equation*}
    \ProbCond{\cE}{\bigcap_{j \in V(G)}\overline{\cE_j}} \leq \left(D^{\gamma/2 - 1}\right)^{|S|}\exp(30r^2 D|S|\exp(-\log^2 D)) \leq \left(2D^{\gamma/2 - 1}\right)^{|S|}.
  \end{equation*}
  Since $2D^{\gamma/2 - 1} \leq 1 / D^{1 - \gamma}$ and $S\subseteq M$ only if $S \subseteq E(G')$, the result follows.
\end{proof}

\section{Proof of Theorem~\ref{thm:spread-decomposition}}\label{s:MainProof}

Now prove Theorem~\ref{thm:spread-decomposition}, which as mentioned, implies Theorem~\ref{thm:ExistenceSpread}.
\begin{proof}[Proof of Theorem~\ref{thm:spread-decomposition}]
    Throughout the proof we assume $n$ is sufficiently large and $\beta$ sufficiently small for various inequalities to hold.  In addition, we assume $n$ is sufficiently large with respect to $1 / \beta$.  
  Let $G \cong K_n$, let $C \coloneqq n^{8\beta}$, and let $p \coloneqq C^{-2}/2$.  By Lemma~\ref{lem:reservoir}, there exists $X \subseteq G$ such that $\Delta(X) \leq \Delta \coloneqq 2pn = n/C^2$ and for all $e \in E(G) \setminus E(X)$, there exist at least $p^{\binom{q}{2}}n^{q - 2}$ $K_q$'s in $X \cup \{e\}$ containing $e$.
  By Theorem~\ref{thm:OmniSpread}, there exists a probability distribution on $K_q$-omni-absorbers $A$ for $X$ with decomposition family $\cH_A$ and decomposition function $\cQ_A$ such that $\Delta(A) \leq C\Delta$ and 
  \begin{equation}\label{eqn:omni-absorber-spreadness}
      \Prob{\bigcup_{L\subseteq X}\{\cS \subseteq \cQ_A(L)\}}  \le \left(\frac{1}{n^{(q - 6)/2 + 2\beta}}\right)^{|\cS|} \text{ for all $K_q$-packings $\cS$ of $G$.}
  \end{equation}
  
  Let $\cA$ be the set of omni-absorbers supported by this distribution.  For each omni-absorber $A \in \cA$, let $J_A \coloneqq G - (X \cup A)$, and note that $\delta(G) \geq (1 - (C+1)/C^2)n$.  Hence, by Lemma~\ref{lem:RegBoost} applied to $J_A$, there exists a hypergraph $\cD^A_1 \subseteq \mathrm{Design}(J_A, K_q)$ that is $((1/2 \pm n^{-(q-2)/3})\left.\binom{n - 2}{q - 2}\middle.\right.)$-regular.
  Let $\cD^A_2$ be the $\binom{q}{2}$-uniform hypergraph where
  \begin{equation*}
      V(\cD^A_2) \coloneqq E(J_A) \cup X
      \qquad\text{and}\qquad
      E(\cD^A_2)\coloneqq\{S \subseteq V(\cD^A_2) : |S \cap E(J_A)| = 1 \text{ and } G[S] \cong K_q\}.
  \end{equation*}
  We will apply Theorem~\ref{thm:spread-nibble} with $\cD^A_1$ and $\cD^A_2$ playing the roles of $G_1$ and $G_2$, respectively.  To that end, note that $\cD^A_2$ is bipartite with bipartition $(E(J_A), X)$, that $V(\cD^A_1) \cap V(\cD^A_2) = E(J_A)$, and that $\cD^A_1$ and $\cD^A_2$ are edge disjoint, and let $\cD^A \coloneqq \cD^A_1 \cup \cD^A_2$ and $D \coloneqq (1/2 + n^{-(q-2)/3})\binom{n-2}{q-2}$.  Note that $n^{q-2}/(3(q-2)!) \leq D \leq n^{q - 2}$.
  The maximum codegree of $\cD^A$ is at most the maximum codegree of $\mathrm{Design}(K_n, K_q)$ which is at most $\binom{n - 3}{q - 3} \leq D^{1 - 1/(2q-5)}$ since $D \geq n^{q - 5/2}$.
  Every $e \in X$ is in at most $\Delta(X)n^{q-3} \leq n^{q-2}/C^2 \leq D$ hyperedges $S\subseteq E(\cD^A_2)$, so $d_{\cD^A_2}(e) \leq D$ for every $e \in X$.  
  Every $e \in E(J_A)$ is in at least $p^{\binom{q}{2}}n^{q - 2} \geq n^{q - 2 - 8q^2\beta} \geq D^{1 - 8q^2\beta}$ hyperedges $S \subseteq E(\cD^A_2)$ by the choice of $X$, so $d_{\cD^A_2}(e) \geq D^{1 - 8q^2\beta}$ for every $e \in E(J_A)$.
  Every $e \in \cD^A_1$ satisfies $d_{\cD^A_1}(e) \leq D$ and $d_{\cD^A_1}(e) \geq (1/2 - n^{-(q-2)/3})\binom{n-2}{q-2} \geq (1 - 2n^{-(q-2)/3})D \geq (1 - D^{-1/4})D$ by the choice of $D$ and $\cD^A_1$, where in the last inequality we used that $D \geq n^{q-2} / (3(q-2)!)$.
  Hence, we may apply Theorem~\ref{thm:spread-nibble} with $\binom{q}{2}$, $1/q$, $\min\{1/(2q-5), 1/4\}$, $D$, and $8q^2\beta$ playing the roles of $r$, $\gamma$, $\beta$, $D$, and $\alpha$, respectively, to obtain a probability distribution on $E(J_A)$-perfect matchings $M_A$ of $\cD^A$, which correspond to $K_q$-packings $\cM_A$ of $G - A$ covering every edge of $J_A$, such that
  \begin{equation}\label{eqn:nibble-spread}
      \ProbCond{\cS \subseteq \cM_A}{A = A'} \le \left(\frac{1}{n^{q-3}}\right)^{|\cS|} \text{ for all $K_q$-packings $\cS$ of $G$ and $A' \in \cA$}.
  \end{equation}
  For every such matching, we let $L_{A, M_A} \coloneqq X \setminus \bigcup M_A$, and we choose the $K_q$-decomposition $\cM_A \cup \cQ_A(L_{A, M_A})$.  
  
  We claim that this probability distribution on $K_q$-decompositions is $(n^{-(q-6)/2 - \beta})$-spread.  To that end, let $\cS$ be a $K_q$-packing of $G$, and note that
    \begin{align*}
        \Prob{\cS \subseteq \cM_A \cup \cQ_A(L_{A, M_A})} &= \sum_{\cS' \subseteq \cS}\Prob{\cS' \subseteq \cM_A \text{ and } \cS\setminus \cS'\subseteq \cQ_A(L_{A, M_A})}\\
        &\leq\sum_{\cS'\subseteq \cS}\Prob{\cS' \subseteq \cM_A \text{ and } 
      \bigcup_{L\subseteq X}\{\cS\setminus \cS' \subseteq \cQ_A(L)\}}\\
        &=\sum_{\cS' \subseteq \cS}\ProbCond{\cS' \subseteq \cM_A}{\bigcup_{L\subseteq X}\{\cS\setminus \cS' \subseteq \cQ_A(L)\}}\Prob{\bigcup_{L\subseteq X}\{\cS\setminus \cS' \subseteq \cQ_A(L)\}}.
    \end{align*}
    By the law of total probability and \eqref{eqn:nibble-spread}, for every $\cS' \subseteq \cS$,
    \begin{align*}
        \ProbCond{\cS' \subseteq \cM_A}{\bigcup_{L\subseteq X}\{\cS\setminus \cS' \subseteq \cQ_A(L)\}} &= \sum_{A'\in \cA}\ProbCond{\cS' \subseteq \cM_A}{A = A'}\ProbCond{A = A'}{\bigcup_{L\subseteq X}\{\cS\setminus \cS' \subseteq \cQ_A(L)\}}\\
        &\leq \left(\frac{1}{n^{q-3}}\right)^{|\cS'|} \sum_{A' \in \cA}\ProbCond{A = A'}{\bigcup_{L\subseteq X}\{\cS\setminus \cS' \subseteq \cQ_A(L)\}}\\
        &= \left(\frac{1}{n^{q-3}}\right)^{|\cS'|}.
    \end{align*}
    For every $\cS' \subseteq \cS$, by \eqref{eqn:omni-absorber-spreadness}, we have
    \begin{equation*}
        \Prob{\bigcup_{L\subseteq X}\{\cS\setminus \cS' \subseteq \cQ_A(L)\}} \leq \left(\frac{1}{n^{(q - 6)/2 + 2\beta}}\right)^{|\cS|-|\cS'|}.
    \end{equation*}
    Combining the inequalities above, since $q - 3 \geq (q - 6)/2 + 2\beta$ and $2/n^{(q-6)/2 + 2\beta} \leq 1/n^{(q-6)/2 + \beta}$, we have
    \begin{equation*}
        \Prob{\cS \subseteq \cM_A \cup \cQ_A(L_{A, M_A})} 
        \leq \sum_{\cS' \subseteq \cS}\left(\frac{1}{n^{q-3}}\right)^{|\cS'|}\left(\frac{1}{n^{(q - 6)/2 + 2\beta}}\right)^{|\cS|-|\cS'|} 
        \leq \left(\frac{1}{n^{(q - 6)/2 + \beta}}\right)^{|\cS|},
    \end{equation*}
    as desired.
\end{proof}


\end{document}